\documentclass[final]{aptpub}
\usepackage{amsmath,amssymb,amstext,color}
\usepackage{graphicx}
\usepackage{personal}
\newcommand{\floor}[1]{\lfloor {#1} \rfloor}
\newcommand{\eps}{\varepsilon}
\numberwithin{equation}{section}
\numberwithin{theorem}{section}
\numberwithin{proposition}{section}
\numberwithin{lemma}{section}
\numberwithin{corollary}{section}
\authornames{Harsha Honnappa} 
\shorttitle{Rare Events of Transitory Queues} 



\begin{document}

\title{Rare Events of Transitory Queues} 

\authorone[School of Industrial Engineering, Purdue University]{Harsha Honnappa} 

\addressone{315 N. Grant St., West Lafayette IN 47906. honnappa@purdue.edu} 

\keywords{workload; empirical process; exchangeable increments; large
  deviations; population acceleration; transitory queues} 

\ams{60K25,60F10}{90B22,68M20} 


\begin{abstract}
We study the rare event behavior of the workload process in a transitory queue,
where the arrival epochs (or `points') of a finite number of jobs are assumed to be the ordered statistics of 
independent and identically distributed (i.i.d.) random variables. The service
times (or `marks') of the jobs are assumed to
be i.i.d. random variables with a general distribution, that are jointly
independent of the arrival epochs. Under the assumption that the
service times are strictly positive, we derive the large
deviations principle (LDP) satisfied by the workload process. The
analysis leverages the connection between ordered statistics and
self-normalized sums of exponential random variables to establish the LDP. This paper
presents the first analysis of rare events in transitory queueing
models, supplementing prior work that has focused on fluid and
diffusion approximations.
\end{abstract}


\section{Introduction}
We explicate the rare event behavior of a `transitory' queueing model, by proving a large
deviations principle (LDP) satisfied by the workload process of the queue. A formal definition of a transitory queue follows
from \cite{HoJaWa2016}:

\begin{definition} [Transitory Queue]
  Let $A(t)$ represent the cumulative traffic entering a queueing
  system. The queue is transitory if $A(t)$ satisfies
  \begin{equation}
    \label{eq:1}
    \lim_{t \to\infty} A(t) < \infty ~\text{a.s.}
  \end{equation}
\end{definition}

\noindent 
We consider a specific transitory queueing model where the arrival epochs
(or `points')
of a finite but
large number of jobs, say $n$, are `randomly
scattered' over $[0,\infty)$; that is the arrival epochs $(T_1,\ldots,T_n)$ are i.i.d.,
and drawn from some distribution with support in $[0,\infty)$. We
assume that the service times (or `marks') $\{\nu_1,\ldots, \nu_n\}$ are i.i.d., 
jointly independent of the
arrival epochs and with log moment generating function that satisfies
$\varphi(\theta) < \infty$ for $\theta \in \bbR$. We call this
queueing model the $RS/GI/1$ queue (`$RS$' standing for
\textit{randomly scattered}; this was previously dubbed the $\D_{(i)}/GI/1$ queueing model in \cite{HoJaWa2014}). 

While the i.i.d. assumption on the arrival epochs implies that this is a
homogeneous model, \cite{HoJaWa2014} shows that the workload process displays time-dependencies in
the large population fluid and diffusion scales, that mirrors 
those observed for `dynamic rate' queueing models where
time-dependent arrival rates are explicitly
assumed. This indicates that the rare event behavior of the workload
or queue length process should be atypical compared to that of time-homogeneous queueing models (such as the $G/G/1$
queue; see \cite{GaOcWi2004}). Further, while the standard dynamic rate traffic model
is a nonhomogeneous Poisson process that necessarily has
independent increments, it is less than obvious that is a reasonable
assumption for many service systems. \cite{Gl2012,Gl2014}, for
instance, highlights data analysis and simulation results in the call center
context that indicate that independent increments might not be appropriate. A tractable
alternative is to assume that the increments
are exchangeable \cite{Al1985}. Lemma~10.9 in \cite{Al1985} implies
that \textit{any} traffic process over the horizon $[0,1]$ with exchangeable
increments is necessarily equal in distribution to the empirical sum process
\begin{equation} \label{eq:empiric}
\sum_{i=1}^N \mathbf 1_{\{T_i \leq t\}}~\forall t \in [0,1],
\end{equation}
where the $\{T_i,~1\leq i \leq N\}$ are independent and uniformly
distributed in [0,1]. In \cite{HoJaWa2014} we defined
\eqref{eq:empiric} as the traffic count process for the
$RS/GI/1$ queue. This can be considered the canonical model of a
transitory traffic process with exchangeable increments. Thus, the
results in this paper can also be viewed as explicating the rare events
behavior of queueing models with exchangeable increments. To the best
of our knowledge this has not been reported in the literature before.

Transitory queueing models, and the $RS/GI/1$ queue in
particular, have received some recent interest in the applied
probability literature, besides \cite{HoJaWa2014}. In forthcoming work~\cite{HoGl2016} studies large deviations and
diffusion approximations to the workload process in a `near balanced'
condition on the offered load to the system. The current paper
complements this by not assuming the near balanced
condition. In recent work,
\cite{BeHoJvL2015} established diffusion approximations to the queue
length process of the $\D_{(i)}/GI/1$ queue under a uniform
acceleration scaling regime, where in it is assumed that the ``initial
load'' near time zero satisfies $\rho_n = 1 +
\beta n^{-1/3}$. This, of course, contrasts with the population
acceleration regime considered in this paper, where the offered load
is accelerated by the population size at all time instances in the
horizon $[0,1]$. The same authors have also considered the
effect of heavy-tailed service in transitory queues in
\cite{BeHoJvL2016}, and established weak limits to the scaled workload
and queue length processes to reflected alpha-stable processes.

In the ensuing discussion, we will largely focus on the case that the arrival epochs are
uniformly distributed with support $[0,1]$. Our first result in Theorem
\ref{thm:cramer-order} establishes a large deviations result for the
ordered statistics process
\(
(T^n(t) := T_{(\lfloor nt \rfloor)}~\forall t \in [0,1])~\text{and}~n\geq 1,
\)
where $T_{(j)}$ represents the $j$th order statistic. 
This result parallels that in
\cite{DuMaTo2010}, where the authors derive a sample path large
deviations result for the ordered statistics of i.i.d. uniform random variables. Our results deviate from this result in a couple of
ways. First, we do not require a full sample path
LDP, since we are interested in understanding the large deviations
of the workload at a given point in time. Second, our proof technique
is different and explicitly uses
the connection between ordered statistics and self-normalized sums of
exponential random variables.  It is also important to note the result in \cite{Bo2007}, where the
author uses Sanov's theorem to prove the large deviation principle for
$L$-statistics, which could be leveraged to establish the LDP for the
traffic process in~\eqref{eq:empiric} and, hence, the number-in-system
process. The objective of our study, on the other hand, is the workload process. In Corollary~\ref{cor:cramer-order} we use the
contraction principle to extend this large deviations result to
arrival epochs that have distribution $F$ with positive support, under
the assumption that the distribution is absolutely
continuous and strictly increasing. However, much of the `heavy-lifting' for the workload LDP
can be demonstrated with uniform order statistics arrival epochs, so
in the remainder of the paper we do not emphasize the extension to
more generally distributed arrival epochs.

In Proposition \ref{thm:marked-order} we make use of the
proof of Theorem \ref{thm:cramer-order} and the well known Cramer's
Theorem \cite[Theorem 2.2.3]{DeZe2010} to derive the large
deviation rate function for the offered load process 
\(
X^n(t) := S^n(t) - T^n(t) ~\forall t\in[0,1]~\text{and}~n\geq 1,
\)
where 
\(
S^n(t) := \sum_{i=1}^{\floor{nt}} \nu_i
\)
is the partial sum of the service times.
 Interestingly enough, the LDP (and
 the corresponding good rate function) shows that the most likely path
 to a large deviation event depends crucially on both the sample path
 of the offered load process up to $t$ as well as the path
 \textit{after} $t$. This is a direct reflection of the fact that the
 traffic process is exchangeable and that there is long-range
 dependence between the inter-arrival times (which are `spacings'
 between ordered statistics, and thus finitely exchangeable). 

We prove the LDP for the workload process \[W^n(t) :=
\Gamma(X^n)(t) = \sup_{0 \leq s \leq t} (X^n(t) - X^n(s)),\] for fixed
$t \in [0,1]$, by exploiting the
continuity of the reflection regulator map $\Gamma(\cdot)$. However, to do so, we
first establish two auxiliary results: in
Proposition~\ref{prop:exponential-eq} we prove the exponential
equivalence of the workload process and
a linearly interpolated version $\tilde X^n$. Then, in
Proposition~\ref{prop:sample-path-offered} we prove the
LDP satisfied by the `partial' sample paths $(\tilde X^n(s),~0\leq s\leq t)$ of the offered load process for 
fixed $t \in [0,1]$. Then, in
Theorem~\ref{thm:ldp-workload} we establish the LDP for the workload
process by applying the contraction mapping theorem with the reflection
regulator map and exploiting the two propositions mentioned above.  We
conclude the paper with a summary
and comments on future directions for this research.


\subsection{Notation}
We assume that all random elements are defined with respect to an
underlying probability sample space $(\Omega, \sF, \bbP)$. 
We denote convergence in probability by
$\stackrel{P}{\to}$. We denote the space $\sX$ and topology of
convergence $\sT$ by the pair $(\sX,\sT)$, where appropriate. In
particular we note $(\sC[0,t],\sU)$, the space of
continuous functions with domain $[0,t]$, equipped with the uniform
topology. We also designate $\bar \sC[0,t]$ as the space of all
continuous functions that are non-decreasing on the domain
$[0,t]$. $\|\cdot\| = \sup_{0\leq s \leq 1} (\cdot)$ represents the
supremum norm on $\sC[0,1]$. Finally, we will use the following standard definitions in
the ensuing results:
\newpage
\begin{definition} [Rate Function]
 Let $\sX$ be Hausdorff topological space. Then,
  \begin{itemize}
  \item a rate function is a lower semicontinuous mapping $I : \sX \to
    [0,\infty]$; i.e., the level set $\{x \in \sX : I(x) \leq \a\}$
    for any $\a \in [0,\infty)$ is a closed subset of $\sX$, and
  \item a rate function is `good' if the level sets are also compact.
  \end{itemize}
\end{definition}

\begin{definition}[Large Deviations Principle (LDP)]
  The sequence of random elements $\{X_n,~n \geq 1\}$ taking values in
  the Hausdorff topological space $\sX$ satisfies a
  large deviations principle (LDP) with rate function $I : \sX \to \bbR$ if

\noindent a) for each open set $G \subset \sX$
\[
\liminf_{n \to \infty} \frac{1}{n} \log \bbP(X_n \in G) \geq -\inf_{x
  \in G} I(x), ~\text{and}
\] 

\noindent b) for each closed set $F \subset \sX$
\[
\limsup_{n \to \infty} \frac{1}{n} \log \bbP (X_n \in F) \leq -\inf_{x
\in F} I(x).
\]

\end{definition}

\begin{definition}[Weak LDP]
  The sequence of random elements $\{X_n,~n \geq 1\}$ taking values in
  the Hausdorff topological space $\sX$ satisfies a weak
  large deviation principle (WLDP) with rate function $I$ if 

\noindent a) for each open set $G \subset \sX$
\[
\liminf_{n \to \infty} \frac{1}{n} \log \bbP(X_n \in G) \geq -\inf_{x
  \in G} I(x),~\text{and}
\]

\noindent b) for each compact set $K \subset \bbR$,
\[
\limsup_{n\to \infty} \frac{1}{n} \log \bbP(X_n \in K) \leq -\inf_{x
  \in K} I(x).
\]
\end{definition}

\begin{definition}[LD Tight]
 \label{def:ld-tight}
  A sequence of random elements $\{X_n,~n\geq 1\}$ taking values in
  the Hausdorff topological space $\sX$ is large deviation
  (LD) tight if for each $M < \infty$, there exists a compact set
  $K_M$ such that
  \[
  \limsup_{n \to \infty} \frac{1}{n} \log \bbP(X_n \in K_M^c) \leq -M.
  \]
\end{definition}
\section{Model}\label{sec:model}
Let $\{(T_{(i)},\nu_i),~i = 1,2,\ldots,n\}$ for $n \in \bbN$ represent
a marked finite point process, where $\{T_{(i)}, ~i=1,2,\ldots,n\}$
are the epochs of the point process and $\{\nu_i, ~i=1,2,\ldots,n\}$
are the marks. We assume that the two sequences are independent of
each other. $\{T_{(i)}, ~i=1,2,\ldots,n\}$ are the order statistics of
$n$ independent and identically distributed (i.i.d.) random variables with
support $[0,\infty)$ and absolutely continuous distribution $F$. $\{\nu_i,~i=1,2,\ldots,n\}$ are i.i.d. random
variables with support $[0,\infty)$, cumulant generating function
$\varphi(\theta)<\infty$ for some $\theta \in \bbR$ and mean $\bbE[\nu_1] =
1/\mu$. We will also assume that $\bbP(\nu_1 > 0) = 1$, for technical
reasons. Let $\bbD := \{\theta \in \bbR : \varphi(\theta) < \infty\}$
and we assume $0 \in \bbD$. In relation to the
queue, $(T_{(j)},\nu_j)$ represents the arrival epoch and service
requirement of job $j$, and $n$ is the total arrival population. It is
useful to think of the $n$ marked points, or $(T_{(i)},\nu_i)$ pair, being `scattered' over
the horizon following the distribution $F$.

Let $\{\nu_i^n:=\nu_i/n, ~i=1,2,\ldots,n\}$ be a `population accelerated'
sequence of marks. Assume that $(T_{(0)},\nu_0^n) = (0,0)$. The (accelerated) workload ahead of the $j$th job is
\(
W^n_j = (W^n_{j-1} + \nu^n_{j-1} - (T_{(j)} - T_{(j-1)}))_+,
\)
where $(\cdot)_+ := \max\{0,\cdot\}$. By unraveling the recursion, and under the assumption that the queue starts empty, it can be shown that
\[
W^n_j \stackrel{D}{=} (S^n_{j-1} - T_{(j)}) + \max_{0 \leq i \leq j-1} \left( -(S^n_i - T_{(i+1)})\right),
\]
where $S^n_{j-1} := \sum_{i=0}^{j-1} \nu_i^n$. We define the workload process as 
\(
(W^n(t),~t\in[0,1]) := (W^n_{\lfloor nt \rfloor},~t\in[0,1]).
\)
Using the unraveled recursion it can be argued that
\begin{eqnarray}
\label{eq:workload-approx}
W^n(t) \stackrel{D}{=} X^n(t) + \max_{0 \leq s \leq t} (-X^n(s))
\end{eqnarray}
where $X^n(t) :=
n^{-1}\sum_{i=0}^{\floor{nt}}  \nu_i - T_{(\floor{nt})} = S^n(t) - T^n(t)$ (where
$T_{(0)} = 0$) for $t \in [0,1]$ is the offered load process, under
the assumption that $S_0^n = 0$ (i.e., the queue starts empty). Thus, it suffices
to study $\Gamma(X^n)(t):=X^n(t) + \max_{0 \leq s \leq t}(-X^n(s))$,
where $\Gamma : \sD[0,1] \to \sD[0,1]$ is the so-called Skorokhod
regulator map. For future reference, we
call $(T^n(t),~t \in [0,1]) := (T_{(\floor{nt})},~t \in [0,1])$ as
the ordered statistics process.







We propose to study the workload process in the large population
limit and, in particular, understand the rare event behavior in this
limit. As a precursor to this analysis, it is useful to consider
what a ``normal deviation'' event for this process would be. In
particular, The next
proposition proves a functional strong law of large numbers (FSLLN) result for the
workload process, that exposes the first order behavior
of the workload sample path, in the large population limit.

\begin{proposition}
  The workload process $W^n$ satisfies
  \[
  W^n \to \bar W = \frac{1}{\mu}\Gamma\left( F - M\right)~\text{in}~(\sC[0,1],\sU)~\text{a.s.}
  \]
as $n \to \infty$, where $M(t) = \mu t$.
\end{proposition}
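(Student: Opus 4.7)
The plan is to establish this FSLLN by first obtaining uniform almost sure convergence of each of the two summands in $X^n = S^n - T^n$ to deterministic continuous limits, and then invoking Lipschitz continuity of the reflection regulator $\Gamma$ on $(\sC[0,1],\sU)$ to transfer convergence to $W^n = \Gamma(X^n)$. Throughout I will work in the uniform setting $F(t)=t$ emphasized in the paper; the general case is analogous with $F^{-1}$ in place of the identity.

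First I would show $\|S^n - M^{-1}\| \to 0$ a.s., where $M^{-1}(t) := t/\mu$. The ordinary SLLN applied to $\{\nu_i\}$ yields $S^n(t) \to t/\mu$ a.s.\ for each fixed $t \in [0,1]$. Since $S^n$ is non-decreasing in $t$ for every $n$ and the limit $M^{-1}$ is continuous and non-decreasing, a Polya--Dini-type argument (pointwise convergence of a monotone sequence to a continuous monotone limit on a compact interval is uniform) upgrades pointwise a.s.\ convergence to uniform a.s.\ convergence on $[0,1]$.

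Next I would show $\|T^n - F^{-1}\| \to 0$ a.s. In the uniform case $T^n(t) = T_{(\floor{nt})}$ is (up to boundary indexing) the empirical quantile function of $n$ i.i.d.\ $\mathrm{Uniform}[0,1]$ variables. The Glivenko--Cantelli theorem gives uniform a.s.\ convergence of the empirical CDF $F_n$ to $F$, and continuity plus strict monotonicity of $F$ transfers this to uniform a.s.\ convergence of the quantile process $F_n^{-1}$ to $F^{-1}$; equivalently one may note that $T^n$ is non-decreasing and converges pointwise a.s.\ to the continuous limit $F^{-1}$ and apply the same Polya argument. Combining the two displays via the triangle inequality gives $\|X^n - (M^{-1}-F^{-1})\| \to 0$ a.s.

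The final step invokes continuity of the Skorokhod regulator $\Gamma$: for $f,g \in \sD[0,1]$ one has $\|\Gamma(f)-\Gamma(g)\| \leq 2\|f-g\|$, so pathwise
\[
\|W^n - \Gamma(M^{-1}-F^{-1})\| \leq 2\,\|X^n - (M^{-1}-F^{-1})\| \to 0 \quad \text{a.s.}
\]
Moreover, the limiting functional $\Gamma(M^{-1}-F^{-1})$ is continuous because $M^{-1}-F^{-1}$ is, so the convergence indeed takes place in $(\sC[0,1],\sU)$. Finally, in the uniform case $F^{-1}(t)=t$, so positive homogeneity of $\Gamma$ (valid for positive scalars) yields
\[
\Gamma(M^{-1}-F^{-1})(t) = \Gamma\!\left(\tfrac{1}{\mu}(t-\mu t)\right)(t) = \tfrac{1}{\mu}\,\Gamma(F-M)(t),
\]
which is the claimed identity. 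The main obstacle is conceptual rather than technical: one needs to recognize that both $S^n$ and $T^n$ are monotone processes converging pointwise to continuous monotone limits, which lets pointwise SLLN/Glivenko--Cantelli yield uniform convergence for free; after that, Lipschitz continuity of $\Gamma$ does the rest.
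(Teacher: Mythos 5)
Your proof is correct and follows the same high-level route as the paper (establish a functional strong law of large numbers for $T^n$ and $S^n$, then transfer through the regulator map), but where the paper simply cites Lemma 5.8 of Chen and Yao for the joint uniform a.s.\ convergence $(T^n,S^n)\to(e,\mu^{-1}e)$ and then remarks that independence of the two sequences gives joint convergence, you re-derive both limits from first principles: the pointwise SLLN plus a P\'olya-type monotonicity upgrade for $S^n$, and Glivenko--Cantelli (or again pointwise convergence plus P\'olya, since $T^n$ is monotone with continuous limit) for the empirical quantile process $T^n$. This is more self-contained, and it also makes clear that the paper's appeal to independence is superfluous: because both limits are deterministic, joint a.s.\ convergence is automatic via a union of null events, so the triangle inequality on $X^n = S^n - T^n$ is all that is needed. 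You also make explicit the Lipschitz bound $\|\Gamma(f)-\Gamma(g)\|\le 2\|f-g\|$ where the paper only invokes continuity of $\Gamma$ together with continuity of the limit; either is sufficient. Finally, your use of positive homogeneity of $\Gamma$ to pull out the factor $1/\mu$ and match the stated form $\bar W = \mu^{-1}\Gamma(F-M)$ is exactly the identification the paper performs implicitly.
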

\begin{proof}
  First assume that $\{0 < T_{(1)} \leq \ldots \leq T_{(n)} < 1\}$ are the ordered
  statistics of $n$ i.i.d. uniform random
  variables. Then, by \cite[Lemma 5.8]{ChYa01} it follows that the
  ordered statistics process satisfies
  \(
  (T^n,S^n) \to (e, \m^{-1} e) ~\text{in}~(\sC[0,1],\sU)~\text{a.s. as}~ n \to \infty,
  \)
  where $e : \bbR \to \bbR$ is the identity map, and the joint
  convergence follows due to the fact that the arrival epochs and
  service times are independent sequences. Let $\bar X := (\m^{-1} e -
  e)$, which is continuous by definition. Since subtraction is
  continuous under the uniform metric topology it follows that 
  \(
  X^n \to \bar X := \m^{-1}(e - M) ~\text{in}~(\sC[0,1],\sU)~\text{a.s. as}~ n \to \infty.
  \)
  Finally, since
  $\Gamma(\cdot)$ is continuous under the uniform metric, and the limit
  function $\Gamma(\bar X)$ is continuous, it follows that $W^n \to
  \bar W$ in $(\sC[0,1],\sU)$ a.s. as $n \to \infty$. The limit result
  for generally distributed arrival epochs follows by an
  application of the quantile transform to the arrival epochs.
$\QED$
\end{proof}

From an operational perspective it is
useful to understand the likelihood that the workload exceeds an
abnormally large threshold. More precisely, we are interested in the likelihood that
for a given $t \in [0,1]$ $W^n(t) > w$, where $w >> \bar W(t)$. While
this is quite difficult to prove for a fixed $n$, we prove an LDP for
the workload process as the population size $n$ scales to infinity,
that will automatically provide an approximation to the likelihood of
this event. In the ensuing exposition, we will largely focus on the
analysis of a queue where the arrival epochs are modeled as the
ordered statistics of i.i.d. uniform random variables on
$[0,1]$. However, the results can be straightforwardly extended to a
more general case where the arrival epochs have distribution $F$
(with positive support), that is absolutely
continuous with respect to the uniform distribution. 

Our agenda for proving the workload LDP will proceed in several steps. First, we prove an LDP for the
ordered statistics process of i.i.d. uniform random variables. The proof
of this result will then
be used to establish an LDP for  the offered load process
$X^n(t)$. Next, we use a projective limit to establish the LDP for the
sample path of the offered load process $(X^n(s),~0\leq s\leq t)$, for
each fixed $t \in [0,1]$. Finally, we prove an LDP for the workload
process by applying the contraction principle to the LDP for the
sample path $(X^n(s),~0\leq s\leq t)$, transformed through the
Skorokhod regulator map $\Gamma(\cdot)$.

\section{An LDP for the offered load}\label{sec:offered-load}
\subsection{LDP for the ordered statistics process}
As a precursor to the LDP for the
offered load process we prove one for the ordered statistics
process $(T^n(t),~t \in [0,1]) := (T_{(\floor{nt})},~t \in[0,1])$ by leveraging the following 
well-known relation between the order statistics of uniform random
variables and partial sums of unit mean exponential random variables:

\begin{prop} \label{prop:uni-expo}
	Let $0 < T_{(1)}< T_{(2)}<\cdots<T_{(n)} < 1$ be the ordered
        statistics of independent and uniformly distributed random
        variables, and $\{\xi_j, ~1 \leq j \leq n+1\}$ independent
        mean one exponential random variables. 
	Then,
	\begin{equation}
          \label{eq:order-expo}
	\{T_{(j)}, ~1 \leq j \leq n\} \stackrel{D}{=} \left\{\frac{Z_j}{Z_{n+1}}, ~1 \leq j \leq n \right\}, 
	\end{equation}
	where $Z_{j} := \sum_{i=1}^{j} \xi_i$.
\end{prop}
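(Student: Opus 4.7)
The plan is to establish the distributional identity \eqref{eq:order-expo} by an explicit joint density computation, working with the whole vector on both sides simultaneously and then arguing that the marginal coincides with the joint law of the uniform order statistics.

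First I would use the fact that the map $(\xi_1,\ldots,\xi_{n+1}) \mapsto (Z_1,\ldots,Z_{n+1})$ given by partial summation is a bijection from $[0,\infty)^{n+1}$ onto the simplex $\{0 < z_1 < z_2 < \cdots < z_{n+1}\}$ with Jacobian determinant $1$. Since the $\xi_i$ are i.i.d.\ with density $e^{-\xi}$, the joint density of $(Z_1,\ldots,Z_{n+1})$ is $e^{-z_{n+1}}$ on that simplex.

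Next, I would introduce the change of variables $U_j := Z_j/Z_{n+1}$ for $j=1,\ldots,n$ together with $V := Z_{n+1}$. The inverse map is $Z_j = U_j V$ for $j\le n$ and $Z_{n+1}=V$, whose Jacobian has absolute value $V^n$. Pushing the density forward, the joint density of $(U_1,\ldots,U_n,V)$ is $V^n e^{-V}$ on $\{0 < u_1 < \cdots < u_n < 1,\ v>0\}$. The variables separate, so I integrate $V$ out via $\int_0^\infty v^n e^{-v}\,dv = n!$, and conclude that the marginal density of $(U_1,\ldots,U_n)$ equals $n!\,\mathbf{1}\{0<u_1<\cdots<u_n<1\}$.

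Finally, I would invoke the standard fact that the order statistics $(T_{(1)},\ldots,T_{(n)})$ of $n$ i.i.d.\ $\mathrm{Uniform}[0,1]$ random variables also have joint density $n!$ on the same simplex (obtained by summing the $n!$ permutations of the product density $\mathbf{1}_{[0,1]^n}$). Matching the two densities gives the claimed equality in distribution. There is no real obstacle here — the argument is a routine Jacobian calculation — but the only delicate point is keeping track of the fact that the marginal of $(U_1,\ldots,U_n)$ indeed factors cleanly from $V$, which is what makes this calculation work and is the whole reason the normalized partial sums have the uniform order-statistic law.
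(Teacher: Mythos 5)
Your Jacobian computation is correct and complete: the lower-triangular partial-sum map has unit Jacobian, the change to $(U_1,\ldots,U_n,V)=(Z_1/Z_{n+1},\ldots,Z_n/Z_{n+1},Z_{n+1})$ contributes $V^n$, and integrating out $V$ leaves the density $n!$ on the ordered simplex, which matches that of uniform order statistics. The paper itself does not prove this proposition but cites a reference (Durrett, Lemma~8.9.1); your argument is precisely the standard textbook proof one finds there, so there is nothing to reconcile.
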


Proofs of this result can be found in \cite[Lemma 8.9.1]{Du10}. 
Now, consider the convex, continuous function $I_t : [0,1] \to \bbR$ indexed by $t \in [0,1]$,
 \begin{equation}
   \label{eq:rate-function-OS}
   I_t(x) = t \log\left( \frac{t}{x} \right) + (1-t) \log \left(\frac{1-t}{1-x} \right).
 \end{equation}
Figure~\ref{fig:rate-function-OS} depicts~\eqref{eq:rate-function-OS}
for different index values $t \in [0,1]$. In
Theorem~\ref{thm:cramer-order} below we show that $I_t$ is the good
rate function of the LDP satisfied by the ordered statistics
process. It is interesting to note that this function is also the rate
function satisfied by a sequence of i.i.d. Bernoulli random variables
with parameter $t$; see the citations in \cite{DiZa1992}. 

\begin{figure}[h]
  \centering
  \includegraphics[scale=0.5]{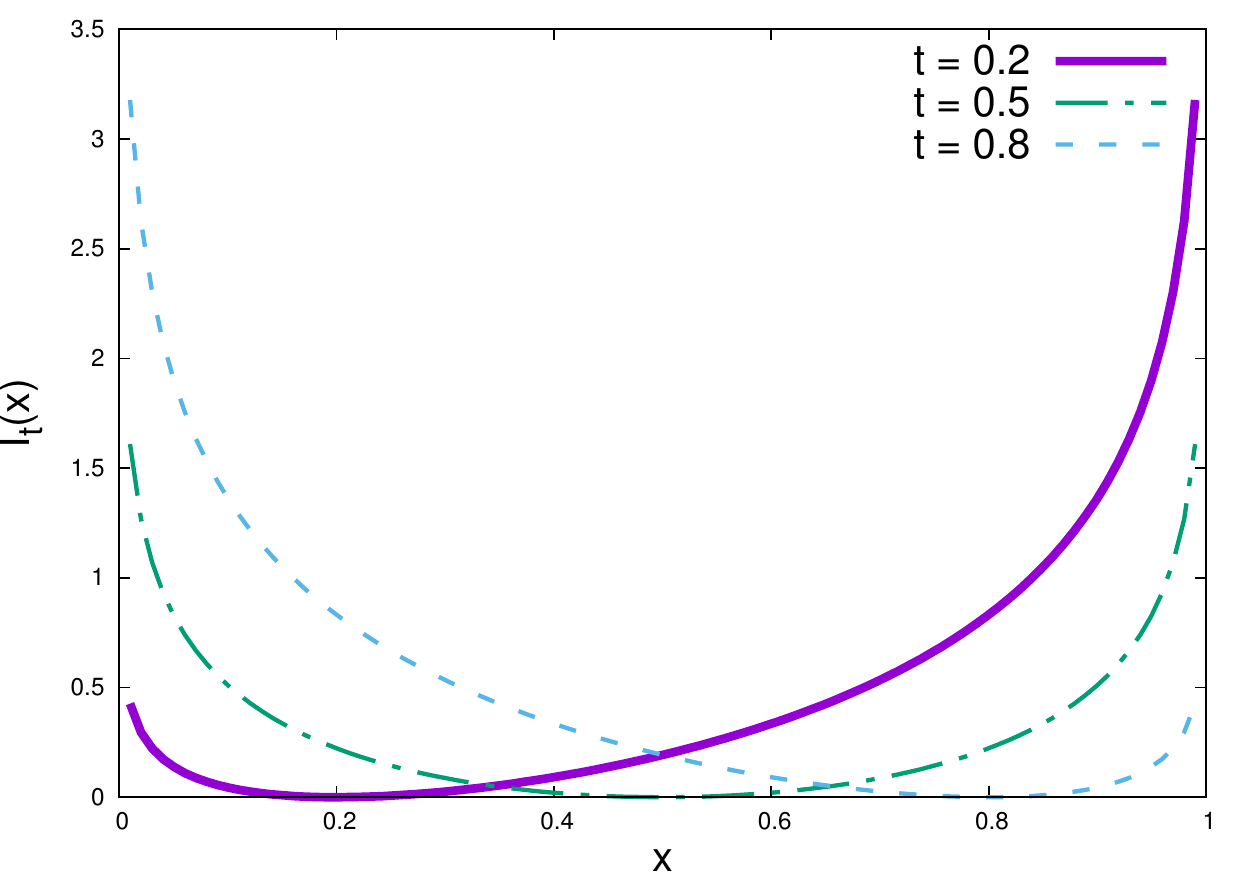}
  \caption{Rate function for the ordered statistics process.}
  \label{fig:rate-function-OS}
\end{figure}

\begin{theorem} [LDP for the Ordered Statistics Process]
	\label{thm:cramer-order}
        Fix $t \in [0,1]$. The ordered statistics process $T_{n}(t)$
        satisfies the LDP with good rate function~\eqref{eq:rate-function-OS}.

\end{theorem}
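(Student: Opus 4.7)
The plan is to exploit the distributional identity from Proposition~\ref{prop:uni-expo}, namely $T^n(t) \stackrel{D}{=} Z_{\lfloor nt \rfloor}/Z_{n+1}$, to reduce the problem to an LDP for a ratio of two independent scaled partial sums of i.i.d.\ Exp(1) random variables, and then invoke the contraction principle. The first step is to split $Z_{n+1} = Z_{\lfloor nt \rfloor} + (Z_{n+1} - Z_{\lfloor nt \rfloor})$; the two summands involve disjoint blocks of the underlying exponentials $\xi_j$, so the pair $(U_n,V_n) := (Z_{\lfloor nt \rfloor}/n,\, (Z_{n+1} - Z_{\lfloor nt \rfloor})/n)$ is independent, and $T^n(t) \stackrel{D}{=} U_n/(U_n+V_n)$.

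Next, I would apply the G\"artner--Ellis theorem coordinatewise. A direct computation yields
\[
\lim_{n\to\infty} \frac{1}{n}\log \bbE[e^{n\theta U_n}] = \lim_{n\to\infty} \frac{\lfloor nt \rfloor}{n} \log\frac{1}{1-\theta} = -t\log(1-\theta) \quad (\theta < 1),
\]
and similarly the limiting scaled CGF of $V_n$ is $-(1-t)\log(1-\theta)$. Both limits are $C^\infty$ and steep on $(-\infty,1)$, so G\"artner--Ellis gives an LDP for each of $U_n$ and $V_n$ at rate $n$ with good rate functions $J_t(u) = u - t - t\log(u/t)$ on $(0,\infty)$ (and $+\infty$ otherwise) and the analogous $J_{1-t}(v) = v - (1-t) - (1-t)\log(v/(1-t))$. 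Independence then upgrades this to a joint LDP for $(U_n,V_n)$ with good rate function $J_t(u) + J_{1-t}(v)$.

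Because $J_t$ and $J_{1-t}$ both explode logarithmically at the origin, the effective domain of the joint rate function is $(0,\infty)^2$, on which the map $f(u,v) := u/(u+v)$ is continuous. The contraction principle then produces an LDP for $T^n(t) = f(U_n,V_n)$ with rate function
\[
I_t(x) = \inf\{J_t(u) + J_{1-t}(v) : u,v > 0,\, u/(u+v) = x\}.
\]
To finish, I would substitute $v = u(1-x)/x$ and differentiate in $u$; the stationary condition reduces to $1/u = 1/x$, so the unique minimizer is $(u^*,v^*) = (x,\,1-x)$, and plugging in gives exactly $t\log(t/x) + (1-t)\log((1-t)/(1-x))$, matching \eqref{eq:rate-function-OS}.

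The calculation itself is bread-and-butter, so the only real obstacle is checking the technical hypotheses: essential smoothness and steepness for G\"artner--Ellis (both trivially satisfied here), continuity of $f$ on the effective domain (which is why the blow-up of $J_t$ and $J_{1-t}$ at $0$ is genuinely needed), and the degenerate endpoints $t \in \{0,1\}$, which follow separately from the deterministic behavior of $T^n(0)$ and the concentration of $T^n(1) = T_{(n)}$ near $1$.
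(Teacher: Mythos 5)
Your argument is correct, and it takes a genuinely different route from the paper's proof of Theorem~\ref{thm:cramer-order}. The paper works entirely by hand: for the upper bound it applies Chernoff's inequality directly to $\bbP(Z_{n+1-\floor{nt}} > \frac{1-a}{a}Z_{\floor{nt}})$ and optimizes the tilting parameter explicitly; for the lower bound it performs an exponential change of measure on the two disjoint blocks of exponentials, appeals to the weak law of large numbers under the twisted measure, and then optimizes over $(\theta_1,\theta_2)$ and the free auxiliary level $v$. Your route instead packages both bounds into a single application of G\"artner--Ellis to the independent pair $(Z_{\floor{nt}}/n,\,(Z_{n+1}-Z_{\floor{nt}})/n)$, followed by the contraction principle through $f(u,v)=u/(u+v)$. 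This is shorter and leans on standard machinery where the paper is deliberately explicit; conversely, the paper's hands-on proof makes visible exactly how the long-range dependence between the blocks of spacings shapes the rate function, which the author says is the point of presenting it this way. It is worth noting that the paper itself later proves Lemma~\ref{lem:increments-os} via exactly the G\"artner--Ellis-plus-contraction strategy you use (applied to the full increment vector), and explicitly remarks that Theorem~\ref{thm:cramer-order} follows as the $d=1$ corollary of that lemma; your proof is essentially that corollary, spelled out.

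One technical point deserves to be made sharper than ``the blow-up is genuinely needed.'' The map $f(u,v)=u/(u+v)$ is undefined at the origin, so the contraction principle in the form of \cite[Theorem 4.2.1]{DeZe2010} does not apply verbatim on $[0,\infty)^2$. The clean fix is to run the LDP on the Polish space $(0,\infty)^2$: the joint rate function $J_t(u)+J_{1-t}(v)$ has compact level sets \emph{as subsets of $(0,\infty)^2$} because each summand diverges both as its argument tends to $0$ and as it tends to $\infty$, and for any $F$ closed in $(0,\infty)^2$ one has $\overline{F}^{\,\bbR^2}\setminus F\subseteq\partial[0,\infty)^2$ where the rate function is $+\infty$, so $\inf_{\overline{F}}=\inf_F$ and the upper bound transferred from $\bbR^2$ is not weakened. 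With $f$ continuous on $(0,\infty)^2$ the contraction principle then applies as stated. That one-paragraph justification is the only piece your sketch is missing; the G\"artner--Ellis verification (finiteness and steepness of $-t\log(1-\theta)$ on $(-\infty,1)$), the product LDP via independence, and the Lagrangian computation giving $(u^*,v^*)=(x,1-x)$ are all correct as written.
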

\begin{proof}

\noindent a) Let $F \subset [0,1]$ be closed. There are two cases to
consider. First, if $t \in F$, then $I_t(F) := \inf_{x \in F} I_t(x) =
0$, by definition. Thus, we assume that $t \not \in F$. Let $x_+ := \inf \{x \in F : x > t\}$ and $x_{-} := \sup \{x \in F : x
< t\}$. If $\sup F < t$ then we define $x_+ = 1$, and if $\inf F > t$
we set $x_- = 0$. Since $t \not \in F$, there exists a connected open
set $F^c \supseteq (x_-, x_+) \ni t$. 

Now, let $0 \leq a < t$. Proposition \ref{prop:uni-expo} implies that
	\begin{eqnarray}
          \nonumber
	\bbP(T_{(\lfloor nt \rfloor)} < a) &=& \bbP \left(
                                               \frac{Z_{\floor{nt}}}{Z_{n+1}}
                                               < a\right)\\
\nonumber
	&=&	\bbP \left(Z_{n+1 - \floor{nt}} >  \frac{1-a}{a} Z_{\floor{nt}}\right)\\
	\label{eq:integral1}
	&=& \begin{split}\int_0^{\infty} \bbP \left(Z_{n+1 -
              \floor{nt}} > \frac{(1-a)x}{a}\right) \times \bbP \left(Z_{\floor{nt}} \in dx\right).\end{split}
	\end{eqnarray}
	Now, Chernoff's inequality implies that
	\[
	\bbP \left(Z_{n+1 - \floor{nt}} > \frac{(1-a)x}{a}\right) \leq e^{-\theta_1 \frac{(1-a)x}{a}} E\left[e^{\theta_1 Z_{n+1 - \floor{nt}}}\right].
	\]
	Since $Z_{n+1 - \floor{nt}} = \sum_{i=1}^{n+1 - \floor{nt}} \xi_i$, it follows that
	\(
	E\left[e^{\theta_1 Z_{n+1 - \floor{nt}}}\right] = (1 - \theta_1)^{-(n+1 - \floor{nt})},
	\)
	for $\theta_1 < 1$. Substituting this into \eqref{eq:integral1}, we obtain
	\[
	\bbP(T_{(\lfloor nt \rfloor)} < a) \leq (1 - \theta_1)^{-(n+1 - \floor{nt})} \int_0^\infty e^{-\theta_1\frac{1-a}{a} x} \bbP( Z_{\floor{nt}} \in dx).
	\]
	Recognize that the integral above represents the moment
        generating function of $Z_{\floor{nt}} =
        \sum_{i=1}^{\floor{nt}} \xi_i$. Since $\frac{1-a}{a} > 0$, if
        $1 > \theta_1 > \frac{a}{a - 1}$ it follows that
	\[
	\int_0^\infty e^{-\theta_1\frac{1-a}{a} x} \bbP( Z_{\floor{nt}} \in dx) = \left(1 + \theta_1 \frac{1-a}{a} \right)^{-\floor{nt}}.
	\]
	Putting things together, it follows that
	\[
	\bbP(T_{(\lfloor nt \rfloor)} < a) \leq (1-\theta_1)^{-(n+1-\floor{nt})} \left(1+\theta_1 \frac{1-a}{a}\right)^{-\floor{nt}}.
	\]
Similarly, it can be shown for any $1 \geq b > t$ that 
\[
\bbP(T^n(t) > b) \leq (1 - \theta_1)^{-(n+1 - \floor{nt})} \left(1 +
  \theta_1 \frac{1-b}{b} \right)^{- \floor{nt}},
\]
if $1 > \theta_1 > \frac{b}{b-1}$.

Thus, it follows that 
\begin{eqnarray}
\nonumber
\bbP(T^n(t) \in F) &\leq& \bbP \left( T^n(t) \in (x_-,x_+)^c \right)\\
\nonumber
  &\leq& \bbP(T^n(t) \leq x_-) + \bbP(T^n(t) \geq x_+)\\
\nonumber
  &\leq& (1 - \theta_1)^{-(n+1 - \floor{nt})} \left[\left(1 +
  \theta_1 \frac{1-x_-}{x_-} \right)^{- \floor{nt}} + \left(1 +
  \theta_1 \frac{1-x_+}{x_+} \right)^{- \floor{nt}} \right]\\
\label{eq:I_n}
  &\leq& 2 \max_{x \in F}  \left \{(1-\theta_1)^{-(n+1 - \floor{nt})} \left(1 +
  \theta_1 \frac{1-x}{x} \right)^{- \floor{nt}} \right\}.
\end{eqnarray}

Now, for any $x \in [0,1]$, it can be seen that $(1-\theta_1)^{-(n+1 - \floor{nt})} \left(1 +
  \theta_1 \frac{1-x}{x} \right)^{- \floor{nt}}$ has a unique
maximizer at $\theta_1^* = (t-x)(1-x)^{-1}$. Substituting this
into~\eqref{eq:I_n}, it follows that
	\begin{eqnarray*}
		\limsup_{n \to \infty} \frac{1}{n} \log \bbP (T_{n}(t)
                \in F) &\leq& \max_{x \in F} \left \{-(1-t)
                  \log \left( \frac{1-t}{1-x}\right) - t \log \left(\frac{t}{x}\right)\right \}\\ &=& -\inf_{x \in F} I_t(x).
	 \end{eqnarray*}

	\noindent b) Next, let $G \subset [0,1]$ be an open set, such
        that $t \not \in G$ and $t < \inf\{G\}$. For each
        point $x \in G$, then there exists a $\delta > 0$ (small) such that
        $(x-\d,x+\d) \subset G$. 
        Once again appealing to
        Proposition~\ref{prop:uni-expo}, we have
        \begin{eqnarray*}
          \bbP\left( T^n(t) \in (x-\d,x+\d) \right) && = \bbP\left(
            \frac{\bar Z_{\floor{nt}}}{\bar Z_{\floor{nt}} + \bar
              Z_{n+1-\floor{nt}}} \in (x-\d,x+\d) \right)\\
          && \begin{split}= \int_{z_1=0}^\infty \bbP &(\bar Z_{\floor{nt}} \in
              dz_1)\times\\
          &\bbP \bigg(\bar Z_{n+1-\floor{nt}} \in z_1 \bigg(-1 +
            \frac{1}{x+\d}, -1+\frac{1}{x-\d} \bigg) \bigg),
            \end{split}
        \end{eqnarray*}
where $\bar Z_{m(n)} := n^{-1} Z_{m(n)}$ for $m(n) \in
\{\floor{nt}, n+1-\floor{nt}\}$. Let $v > t > 0$
implying that the right hand side (R.H.S.) above satisfies
\begin{equation}
	\label{eq:det-lower-bound}
R.H.S. \geq \bbP\left(\bar Z_{\floor{nt}} \geq v \right)
\bbP\left( \bar Z_{n+1-\floor{nt}} \in v \left(-1 + \frac{1}{x+\d},
    -1+\frac{1}{x-\d} \right) \right).
\end{equation}

Now, let $\theta_1 > 0$ and consider the partial sum of `twisted' random variables $\{\xi_1^{\theta_1},\ldots,\xi_{\floor{nt}}^{\theta_1}\}$,
	\(
	Z^{\theta_1}_{\floor{nt}} = \sum_{i=1}^{\floor{nt}} \xi^{\theta_1}_i,
	\)
	where the distribution of $\xi^{\theta_1}_1$ is (by an exponential change of measure)
	\[
	\frac{\bbP(\xi^{\theta_1}_1 \in dx)}{\bbP(\xi_1 \in dx)} = \frac{e^{\theta_1 x}}{E[e^{\theta_1 \xi_1}]},
	\]
	and, by induction,
	\[
	\frac{\bbP(Z_{\floor{nt}}^{\theta_1} \in dx)}{\bbP(Z_{\floor{nt}} \in dx)} = \frac{e^{\theta_1  x}}{(E[e^{\theta_1 \xi_1}])^{\floor{nt}}}.
	\]
	Define $\Lambda_n(\theta_1) := \log(E[e^{\theta_1
          \xi_1}])^{\floor{nt}}$, and consider $\bbP(\bar Z_{\floor{nt}} > v )$. From the
        proof of Cram\'{e}r's Theorem (see \cite[Chapter 2]{DeZe2010})
        we have, for $\theta_1 > 0$, 
	\[
	 \frac{1}{n} \log \bbP (Z_{\floor{nt}} >  n v) \geq -\theta_1 v
         - \frac{\floor{nt}}{n} \log(1-\theta_1) + \frac{1}{n} \log
         \bbP \left(Z^{\theta_1}_{\floor{nt}} > n v\right).
	\]
        A straightforward calculation shows that
	\[
	\frac{1}{n} E \left[\sum_{i=1}^{\floor{nt}} \xi^{\theta_1}_i\right] = \frac{\floor{nt}}{n} \frac{1}{1-\theta_1}.
	\]
	Thus, we want to twist the random variables such that $\frac{t}{1-\theta_1} > v$, in which case 
	\(
	\bbP \left(Z^{\theta_1}_{\floor{nt}} > n v\right) \to 1
	\)
	as $n \to \infty$ by the weak law of large numbers. It follows that
	\begin{equation} 
          \label{lb-part2}
	\liminf_{n \to \infty} \frac{1}{n} \log \bbP (\bar Z_{\floor{nt}} > v ) \geq -\theta_1 v -t \log(1-\theta_1).
	\end{equation}

On the other hand, consider the second probabilistic statement
in~\eqref{eq:det-lower-bound},
\[
\bbP\left(\bar Z_{n+1-\floor{nt}}\in v
          \left(-1+\frac{1}{x+\d},-1+\frac{1}{x-\d} \right)\right).
\]
Following a similar argument to that above, we consider the twisted random variables
$\{\xi_1^{\theta_2},\ldots,\xi_{n+1-\floor{nt}}^{\theta_2}\}$, and
define $\tilde \Lambda_n(\theta_2) := \log\left( \bbE[e^{\theta_2
    \xi_1}]\right)^{n+1-\floor{nt}} = -(n+1-\floor{nt}) \log \left(1 -
          \theta_2 \right)$ so that $\bbP\left(\bar Z_{n+1-\floor{nt}}\in v
          \left(-1+\frac{1}{x+\d},-1+\frac{1}{x-\d} \right)\right)$
	\begin{eqnarray}
          \nonumber
          && = \int_{v(-1+ 1/(x+\d))}^{v(-1+ 1/(x-\d))} \bbP \left(\bar
              Z_{n+1-\floor{nt}} \in dy\right)\\
          \nonumber
	&& =\int_{v(-1+ 1/(x+\d))}^{v(-1+ 1/(x-\d))} e^{-n \theta_2 y}
            \exp \left(\tilde \Lambda_n(\theta_2)\right) \bbP \left(\bar
            Z_{n+1-\floor{nt}}^{\theta_2} \in dy \right)\\
          \label{eq:lower-bound}
	&& \begin{split} \geq \exp&\left(-n\theta_2 v \left(-1+
                 \frac{1}{(x-\d)}\right) \right) ~\exp(\tilde \Lambda_n(\theta_2))\\
                 &~\bbP\left(\bar Z^{\theta_2}_{n+1-\floor{nt}} \in
                 v \left(-1+ \frac{1}{(x+\d)},-1+
                 \frac{1}{(x-\d)} \right)\right).\end{split}
	\end{eqnarray}
Observe that
	\[
	\frac{1}{n} E \left[ Z_{n+1-\floor{nt}}^{\theta_2}\right] = \frac{n+1-\floor{nt}}{n} \frac{1}{1-\theta_2}.
	\]
	Thus, we should twist the random variables such that, 
        \[
        \frac{1-t}{1-\theta_2} \in v\left(-1+ \frac{1}{(x+\d)}, -1+
                 \frac{1}{(x-\d)}\right)
        \]
implying that 
	\(
	\bbP \left(\bar Z_{n+1-\floor{nt}}^{\theta_2} \in v\left(-1+ \frac{1}{(x+\d)}, -1+
                 \frac{1}{(x-\d)}\right)\right) \to 1
	\)
	as $n \to \infty$ as a consequence of the weak law of large
        numbers. From~\eqref{eq:lower-bound} it follows that
	\begin{equation} 
          \label{lb-part1}
          \begin{split}
		\liminf_{n \to \infty} \frac{1}{n} \log \bbP \bigg(
                  \bar Z_{n+1-\floor{nt}} &\in v\left(-1+ \frac{1}{(x+\d)}, -1+
                 \frac{1}{(x-\d)}\right) \bigg)\\ &\geq -\theta_2 v
             \left(-1 + \frac{1}{x+\d} \right) - (1-t)
             \log(1-\theta_2).
            \end{split}
	\end{equation}
	Using the limits in \eqref{lb-part1} and \eqref{lb-part2} it
        follows that for any $0 < \eps < \d$,
	\[
        \begin{split}
	 \liminf_{n\to \infty} \frac{1}{n} \log \bbP(T^n(t) & \in
         (x-\eps,x+\eps))\\ &\geq -\theta_1 v -t \log(1-\theta_1) -\theta_2 v
         \left(-1 + \frac{1}{x+ \eps} \right)- (1-t) \log(1-\theta_2).
         \end{split}
	\]
	This is valid for any $v > t$. In particular, setting $v = x-\eps$ we obtain 
	\[
        \begin{split}
	 \liminf_{n\to \infty} \frac{1}{n} \log \bbP(T^n(t) \in
         (x-\eps,x+\eps)) \geq &-\theta_1 (x-\eps) - t \log(1-\theta_1)\\ &-
         \theta_2 \left( \frac{x-\e}{x+\eps} \right) (1-(x+\eps)) - (1-t)
         \log(1-\theta_2).
         \end{split}
	\]

	Now, consider the function $I(\theta_1,\theta_2) := \theta_1 (x-\eps) + t \log(1-\theta_1) +
         \theta_2 \left( \frac{x-\eps}{x+\eps} \right) (1-(x+\eps)) + (1-t)
         \log(1-\theta_2)$. For $ \theta_2,\theta_1 < 1 $, it is
         straightforward to see that the Hessian is positive semi
         definite, implying it is convex. The unique minimizer of $I(\theta_1,\theta_2)$ is $(\theta_1^*,\theta_2^*) = \left(1 - t/(x-\eps), 1 - \left(\frac{x+\eps}{x-\eps}\right)\left(\frac{1-t}{1-(x+\eps)}\right) \right)$. Letting $\e \to 0$,
         it follows that 
	\[
	 I(\theta_1^*,\theta_2^*) = t \log \left(\frac{t}{x} \right) +
         (1-t) \log \left(\frac{1-t}{1-x}\right) = I_t(x).
	\]
        Thus, we have
        \begin{equation}
          \label{eq:local-LD-lower}
          \liminf_{n\to\infty} \frac{1}{n} \log \bbP(T^n(t) \in
          (x-\d,x+\d)) \geq -I_t(x).
        \end{equation}

        Next, it follows by definition that, for small enough $\d >
        0$,
        \[
        \frac{1}{n} \log \bbP(T^n(t) \in G) \geq
        \sup_{x \in G} \frac{1}{n} \log \bbP(T^n(t) \in (x-\d,x+\d)),
        \]
        implying that
        \[
        \liminf_{n\to\infty} \frac{1}{n} \log \bbP(T^n(t) \in G) \geq
        -\inf_{x\in G} I_t(x).
        \]
        On the other hand, if $t > \sup\{G\}$, we will now consider a $v >
        1-t > 0$ in the lower bounding argument used
        in~\eqref{eq:det-lower-bound}. Since the remaining arguments
        are identical to the previous steps, we will not repeat them. This proves the LD lower bound.
        
        Finally, observe that the rate function $I_t$ is continuous
        and convex. Consider the level set $L(c) = \{x \in [0,1] : I_t(x)
        \leq c\}$, for $c > 0$. Let $\{x_n,~n\geq1\}$ be a sequence
        points in the set $L(c)$ such that $x_n \to x^* \in (0,1)$ as $n \to
        \infty$. Since $I_t$ is continuous, it follows that
        $I_t(x_n) \to I_t(x^*)$ as $n \to \infty$. Suppose $I_t(x^*) >
        c$, then the only way this can happen is if there is a
        singularity at $x^*$. However, this contradicts the fact that
        $I_t$ is continuous on the domain $(0,1)$, implying that $x^*
        \in L(c)$. Therefore, it is
        the case that $L(c)$ is closed. Furthermore, this level set is
        bounded (by definition), implying that it is compact. Thus,
        $I_t$ is a good rate function as well.
$\QED$
\end{proof}

Now, suppose that $\{\tilde T_{(i)},~i \leq n\}$ are the ordered statistics
of random variables with distribution $F$
(assumed to have positive support) that is absolutely continuous with
respect to the Lebesgue measure, and strictly increasing. Define 
\begin{equation} \label{eq:rate-function-OS-general}
\tilde I_t(y) := \inf_{x \in [0,1]: F^{-1}(x) = y} I_t(x).
\end{equation}
The following corollary establishes an LDP for
the corresponding order statistics process.

\begin{corollary}\label{cor:cramer-order}
  Fix $t \in [0,1]$. Then, the ordered statistics process
  corresponding to $\{\tilde T_{(i)},~i \leq n\}$ satisfies the LDP with good
  rate function $\tilde I_t$.
\end{corollary}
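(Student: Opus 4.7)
The plan is to reduce the corollary to Theorem~\ref{thm:cramer-order} via the quantile transform and an application of the contraction principle. Because $F$ is absolutely continuous and strictly increasing on the support, $F^{-1}:[0,1]\to[0,\infty)$ (extended by the usual conventions at the endpoints) is a well-defined continuous mapping. The textbook quantile-transform identity then gives the distributional equality $(\tilde T_{(1)},\ldots,\tilde T_{(n)}) \stackrel{D}{=} (F^{-1}(T_{(1)}),\ldots,F^{-1}(T_{(n)}))$, where $T_{(i)}$ are the uniform order statistics. Consequently, for each fixed $t\in[0,1]$, the ordered statistics process $\tilde T^n(t) := \tilde T_{(\floor{nt})}$ has the same law as $F^{-1}(T^n(t))$.

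Next, I would invoke the contraction principle (see \cite[Theorem 4.2.1]{DeZe2010}) with the continuous map $g := F^{-1}$. Theorem~\ref{thm:cramer-order} supplies the LDP for $T^n(t)$ with good rate function $I_t$ on $[0,1]$. The contraction principle then yields an LDP for $g(T^n(t)) \stackrel{D}{=} \tilde T^n(t)$ with rate function
\[
\tilde I_t(y) = \inf\{I_t(x) : x \in [0,1],~F^{-1}(x) = y\},
\]
which is precisely \eqref{eq:rate-function-OS-general}. Since $F$ is strictly increasing, the set $\{x : F^{-1}(x) = y\}$ is a singleton for each $y$ in the range of $F^{-1}$, so the infimum is actually attained and $\tilde I_t(y) = I_t(F(y))$; outside the range one sets the infimum over the empty set to $+\infty$, as usual.

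The goodness of $\tilde I_t$ follows automatically from the contraction principle since $I_t$ is a good rate function on the compact set $[0,1]$ and $g$ is continuous: the level set $\{y : \tilde I_t(y) \leq c\}$ equals $g(\{x : I_t(x) \leq c\})$, which is the continuous image of a compact set and therefore compact.

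The only delicate point — and really the only obstacle — is bookkeeping at the boundary. Since the definition of the rate function via $F^{-1}$ requires some care when $F$ has unbounded support (so that $F^{-1}(1)$ may not be finite), one should work on the compactified range, or restrict attention to $x \in (0,1)$ where $F^{-1}$ is genuinely continuous, and use the fact that $I_t(0) = I_t(1) = \infty$ for $t \in (0,1)$ so that the boundary of $[0,1]$ contributes no mass to the LDP estimates. Once this is handled, the contraction principle applies verbatim and yields the desired result.
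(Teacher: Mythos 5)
Your proposal is correct and follows the same route as the paper: the paper simply notes that $F^{-1}$ is a continuous map between Hausdorff spaces and invokes the contraction principle of \cite[(2.12)]{LySe1987}, which is exactly your argument with the quantile-transform step made explicit. Your additional care at the boundary (unbounded support, $I_t(0)=I_t(1)=\infty$ for $t\in(0,1)$) is a useful clarification but does not change the approach.
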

Since $F^{-1}$ maps $[0,1]$ to $[0,\infty)$, which are Hausdorff
spaces, the proof is a simple application of the contraction principle
\cite[(2.12)]{LySe1987}. For the remainder of the
paper, however, we will operate under the assumption that the arrival
epochs are i.i.d. uniform random variables. The analysis below can be straightforwardly extended to the more
general case where the distribution is absolutely continuous with
respect to the Lebesgue measure.

\subsection{LDP for the offered load}
Next, recall that $\{\nu_i, i \geq 1\}$ is a sequence of i.i.d. random
variables with cumulant generating function $\varphi(\theta) =
\log \bbE[e^{\theta \nu_1}] < \infty$ for some $\theta \in \bbR$.
~The next theorem
shows that the service process $(S^n(t),~t \in [0,1])$ satisfies the
LDP.

\begin{lemma} [Cram\'{e}r's Theorem \cite{DeZe2010}]
\label{thm:cramer-service}
 Fix $t \in [0,1]$. Then, the sequence of random variables
 $\{S^n(t),~n\geq 1\}$ satisfies the LDP with good rate function
 $\Lambda_t^*(x) := \sup_{\theta \in \bbR} \{\lambda x - t \varphi(\theta)\}$.

\end{lemma}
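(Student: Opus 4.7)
The plan is to apply the G\"artner--Ellis theorem rather than Cram\'er's theorem directly, since $S^n(t) = n^{-1}\sum_{i=0}^{\lfloor nt \rfloor}\nu_i$ is not literally a sample mean: the number of summands is $\lfloor nt \rfloor + 1$ but the normalization is by $n$. First I would compute the scaled logarithmic moment generating function of $nS^n(t)$. By independence of the $\nu_i$,
\[
\Lambda_n(\theta) := \frac{1}{n} \log \bbE\bigl[e^{n\theta S^n(t)}\bigr] = \frac{\lfloor nt \rfloor + 1}{n}\,\varphi(\theta) \longrightarrow t\,\varphi(\theta) =: \Lambda(\theta)
\]
as $n \to \infty$, for every $\theta \in \bbR$.

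Next I would verify the G\"artner--Ellis hypotheses. Under the standing assumption that $\varphi(\theta) < \infty$ for all $\theta \in \bbR$, the limit $\Lambda(\theta) = t\,\varphi(\theta)$ is finite, convex, and infinitely differentiable on $\bbR$; in particular it is lower semicontinuous and essentially smooth, and $0$ lies in the interior of its effective domain. Invoking the G\"artner--Ellis theorem (\cite[Theorem 2.3.6]{DeZe2010}) then yields the LDP for $\{S^n(t),\,n\geq 1\}$ at speed $n$ with good rate function
\[
\Lambda^*(x) = \sup_{\theta \in \bbR}\bigl\{\theta x - t\,\varphi(\theta)\bigr\} = \Lambda^*_t(x),
\]
matching the stated rate function (the symbol $\lambda$ in the statement being a typographical error for $\theta$).

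There is no substantial obstacle; the only nontrivial point is the scaling mismatch between the $\lfloor nt \rfloor + 1$ summands and the $n$-normalization, which is absorbed cleanly into the factor $t$ appearing in $\Lambda$. An alternative route that avoids G\"artner--Ellis would apply standard Cram\'er's theorem to the empirical mean $\bar\nu_m := m^{-1}\sum_{i=0}^{m-1}\nu_i$ at speed $m = \lfloor nt \rfloor + 1$ and then transfer the principle to $S^n(t) = (m/n)\,\bar\nu_m$ at speed $n$, using $m/n \to t$; the resulting rate $t\,\varphi^*(x/t)$ agrees with $\Lambda^*_t(x)$ by the elementary Legendre-transform identity $(t\varphi)^*(x) = t\varphi^*(x/t)$. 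Either derivation makes precise the intuitive statement that $S^n(t)$ behaves, at the level of large deviations, like $t$ copies of a Cram\'er tilt of $\nu_1$.
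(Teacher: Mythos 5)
The paper gives no proof of this lemma: it is simply stated as an instance of Cram\'er's theorem and cited to \cite{DeZe2010}, with a remark that goodness of $\Lambda_t^*$ follows from \cite[Lemma 2.2.20]{DeZe2010} once $0$ lies in the interior of $\bbD$. Your proposal is correct and, if anything, more careful than the paper's treatment. You rightly observe that $S^n(t)=n^{-1}\sum_{i=0}^{\floor{nt}}\nu_i$ is not literally a sample mean when $t<1$ (there are $\floor{nt}$ nontrivial summands but the normalization is by $n$), so \cite[Theorem 2.2.3]{DeZe2010} does not apply verbatim; your G\"artner--Ellis computation of $\Lambda_n(\theta)\to t\varphi(\theta)$ resolves this cleanly, as does your alternative route of applying Cram\'er at the natural speed $m=\floor{nt}$ and transferring to speed $n$ via $m/n\to t$ together with the Legendre identity $(t\varphi)^*(x)=t\varphi^*(x/t)$. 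One minor caveat: your G\"artner--Ellis verification invokes the Introduction's assumption that $\varphi$ is finite on all of $\bbR$, which makes $\Lambda=t\varphi$ automatically essentially smooth; under the weaker hypothesis actually stated in Section~2 (only $\varphi<\infty$ for some $\theta$, with $0\in\bbD$) essential smoothness is not automatic, so one would either need to add steepness as an explicit hypothesis for G\"artner--Ellis or fall back on the direct Cram\'er-type bounds to which \cite[Lemma 2.2.20]{DeZe2010} appeals, as the paper implicitly does. You also correctly flag the paper's typo $\lambda$ for $\theta$ in the stated rate function.
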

Note that we specifically assume that $0 \in \bbD$, since \cite[Lemma 2.2.5]{DeZe2010} shows that if $\bbD =\{0\}$, then $\Lambda_t^*(x)$ equals zero for all $x$. \cite[Lemma
2.2.20]{DeZe2010} proves that the rate function is good if the interior
condition is satisfied. We now establish an LDP for the offered load
process $(X^n(t) = S^n(t) - T^n(t),~t \geq 0)$ by leveraging
Theorem~\ref{thm:cramer-order} and
Lemma~\ref{thm:cramer-service}. 

\begin{proposition} \label{thm:marked-order}
  Fix $t \in [0,1]$, and let $\sX := [0,1] \times [0,\infty)$. Then, the sequence of random variables
  $\{X^n(t),~n\geq 1\}$ satisfies the LDP with good rate function
  $J_t(y) = \inf_{\{x \in \sX: x_1 =  x_2 + y\}} I_t(x_1) +
  \Lambda_t^*(x_2)$ for $y \in \bbR$.

\end{proposition}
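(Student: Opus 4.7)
The plan is to combine the two LDPs that have already been established---Theorem \ref{thm:cramer-order} for the ordered statistics process $T^n(t)$ and Lemma \ref{thm:cramer-service} (Cram\'er) for the service sum $S^n(t)$---into a joint LDP on $\sX = [0,1]\times[0,\infty)$, and then push it forward through the continuous map $(x_1,x_2)\mapsto x_2-x_1$ via the contraction principle. Since $X^n(t) = S^n(t) - T^n(t)$ is a continuous image of $(T^n(t),S^n(t))$, and since the marks are by hypothesis jointly independent of the arrival epochs, this is the natural route.

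First, I would establish the joint LDP for the independent pair $(T^n(t),S^n(t))$ with good rate function $K_t(x_1,x_2) := I_t(x_1) + \Lambda_t^*(x_2)$ on the product space $\sX$ equipped with the product topology. Because $T^n(t)$ and $S^n(t)$ are independent, for any rectangle $A\times B$ with $A\subset[0,1]$ and $B\subset[0,\infty)$ one has $\bbP((T^n(t),S^n(t))\in A\times B) = \bbP(T^n(t)\in A)\,\bbP(S^n(t)\in B)$, so the upper and lower LD bounds for rectangles follow directly from the marginal LDPs by taking logs, dividing by $n$, and summing. The extension from rectangles to arbitrary open (resp.\ compact) sets is standard: every open set in the product topology is a union of open rectangles, yielding the lower bound, and the upper bound on compacts follows from a finite subcover argument combined with LD tightness (which in turn is immediate since $I_t$ and $\Lambda_t^*$ are good rate functions, so their sublevel sets are compact and yield compact sublevel sets for $K_t$).

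Next, the map $f:\sX\to\bbR$ defined by $f(x_1,x_2) = x_2 - x_1$ is continuous and $X^n(t) = f(T^n(t),S^n(t))$, so the contraction principle (\cite[Theorem 4.2.1]{DeZe2010}) delivers an LDP for $X^n(t)$ with good rate function
\[
J_t(y) \;=\; \inf_{\{(x_1,x_2)\in\sX:\, x_2-x_1 = y\}} \bigl\{ I_t(x_1) + \Lambda_t^*(x_2) \bigr\},
\]
which, after relabeling the constraint as $x_1 = x_2 + y$ (equivalently $y = -(x_1-x_2)$, up to the sign convention adopted in the statement), is exactly the claimed formula. Goodness of $J_t$ is automatic from the contraction principle applied to a good rate function under a continuous map.

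The only subtle step is the passage from marginal LDPs to the joint LDP, since a product of LDPs is not automatic in full generality---it requires either independence (which we have) or exponential tightness (also guaranteed, since both marginal rate functions are good). I would therefore handle it cleanly by verifying the local LD bounds on open balls $B((x_1^0,x_2^0),\eps)$ using product rectangles contained in (resp.\ covering) the ball, which isolates the one technical point and keeps the rest of the argument a routine application of the contraction principle.
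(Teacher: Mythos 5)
Your proposal is correct and follows essentially the same route as the paper: establish the joint LDP for the independent pair $(T^n(t),S^n(t))$ with product rate function $I_t(x_1)+\Lambda_t^*(x_2)$, then push forward through the continuous subtraction map via the contraction principle. The only difference is that you reconstruct the joint-LDP step from first principles (rectangles, covering argument, exponential tightness from goodness), whereas the paper simply cites Lemma~2.6 and Corollary~2.9 of Lynch--Sethuraman for exactly that step; also note that your parenthetical ``equivalently $y=-(x_1-x_2)$'' should read $y=x_1-x_2$ for the constraint as written in the statement, though the rate function you actually derived, with constraint $x_2-x_1=y$, is the correct one for $X^n=S^n-T^n$.
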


\begin{proof}
  \cite[Lemma 2.6]{LySe1987} implies
  that $\{S^n(t),~n\geq 1\}$ and $\{T^n(t),~n\geq 1\}$ are LD tight
  (as defined in Definition~\ref{def:ld-tight}). By \cite[Corollary  2.9]{LySe1987} it follows that ${(S^n(t),T^n(t)),~n\geq 1}$ satisfy
  the LDP with good rate function $\tilde I_t(x_1,x_2) = I_t(x_1) +
  \Lambda_t^*(x_2)$. Now, since subtraction is trivially continuous on
  the topology of pointwise convergence, it follows that $\{X^n(t) =
  S^n(t) - T^n(t), ~n\geq 1\}$ satisfies the LDP with rate function $J_t$ as a
  consequence of the contraction principle (see
  \cite[(2.12)]{LySe1987}).
$\QED$
\end{proof}

As an example of the rate function, suppose the service times are
exponentially distributed with mean 1. Then, we have
\[
J_t(y) = \inf_{x \in [0,1]} \left\{ t \log\left(\frac{t}{x} \right) +
  (1-t) \log\left(\frac{1-t}{1-x}\right) + t \log \left(
    \frac{t}{x-y}\right) + (x-y-t) \right \}.
\]
 Some (tedious) algebra shows that $J_t(y)$ is strictly convex, and
 thus has a unique minimizer, which is the solution to the cubic
 equation
\[
x^3 - y x^2  - 2 t x +ty = 0.
\]
Unfortunately, the sole real solution to this cubic equation has a
complicated form, which we do not present, but can be found by using a
symbolic solver.

\section{An LDP for the Workload} \label{sec:workload}
Recall that $W^n(t) = \Gamma(X^n)(t) = \sup_{0 \leq s \leq
  t}(X^n(t) - X^n(s))$. The key difficulty in establishing the LDP for $W^n(t)$ is the fact
that while $\Gamma$ is continuous on the space $(\sD[0,1], \sU)$,
the latter is not a Polish space. Therefore, it is not possible to directly apply the
contraction principle to $\Gamma$ to establish the LDP. Consider, instead, the continuous process $(\tilde W^n(t),~t\in
[0,1])$, formed by linearly interpolating between the jump levels of
$W^n$; equivalently, $\tilde W^n = \Gamma(\tilde X^n)$, where $\tilde X^n$
is the linearly interpolated version of the offered load. We first show that $(\tilde W^n(t),~t \in [0,1])$ is
asymptotically exponentially equivalent to $(W^n(t),~t\in
[0,1])$. Next, we prove that, for each fixed $t \in [0,1]$, $(\tilde
X^n(s),~s \in [0,t])$ satisfies the LDP, via a projective limit. This
enables us to prove that $\tilde W^n(t)$
satisfies the LDP by invoking the contraction principle with $\Gamma$
and using the fact that $(\sC[0,t],\sU)$ is a Polish space. Finally, by \cite[Theorem 4.2.13]{DeZe2010} the
exponential equivalence of the processes implies that $W^n(t)$
satisfies the LDP with the same rate function. 

\subsection{Exponential Equivalence}
We define the linearly interpolated service time process as
\[
\tilde S^n(t) := S^n(t) + \left(t - \frac{\floor{nt}}{n} \right) \nu_{\floor{nt}+1},
\]
and the linearly interpolated arrival epoch process as
\[
\tilde T^n(t) := T^n(t) + \left(t - \frac{\floor{nt}}{n} \right)
(T_{(\floor{nt}+1)} - T_{(\floor{nt})}).
\]
Define $\Delta_{n,t} := T_{(\floor{nt}+1)} - T_{(\floor{nt})}$ and
note that these are spacings of ordered statistics. The process $\tilde X^n = \tilde S^n -
\tilde T^n \in \sC[0,1]$ can now be used to define the interpolated
workload process $\tilde W^n = \Gamma(\tilde X^n) \in
\sC[0,1]$. Recall that $\|\cdot\|$ is the supremum norm on $\sC[0,1]$.

\begin{proposition} \label{prop:exponential-eq}
  The processes $\tilde W^n$ and $W^n$ are exponentially
  equivalent. That is, for any $\delta > 0$
  \[
  \limsup_{n\to \infty} \frac{1}{n} \log \bbP(\| \tilde W^n - W^n \| >
  \delta) = -\infty.
  \]
\end{proposition}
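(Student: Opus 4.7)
The plan is to exploit the Lipschitz continuity of the one-sided reflection map $\Gamma$ under the uniform norm, namely $\|\Gamma(f)-\Gamma(g)\|\le 2\|f-g\|$ for $f,g\in\sD[0,1]$ (which follows directly from $\Gamma(h)(t)=\sup_{0\le s\le t}(h(t)-h(s))$ and the inequality $|\sup\alpha-\sup\beta|\le \sup|\alpha-\beta|$). Applied to $f=X^n$ and $g=\tilde X^n$, this yields $\|\tilde W^n-W^n\|\le 2\|\tilde X^n-X^n\|$, reducing the proposition to the exponential equivalence of $\tilde X^n$ and $X^n$ in the uniform norm at rate $n$.

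To bound $\|\tilde X^n-X^n\|$, note that $X^n$ is a step function on the grid $\{k/n:0\le k\le n\}$ while $\tilde X^n$ is the continuous piecewise-linear process coinciding with $X^n$ at every grid point. On each subinterval $[k/n,(k+1)/n]$ the pointwise deviation is therefore at most the absolute jump of $X^n$ at $(k+1)/n$, namely $|\nu_{k+1}/n-\Delta_{n,k+1}|$ with $\Delta_{n,k+1}:=T_{(k+1)}-T_{(k)}$. Since both quantities are nonnegative,
\[
\|\tilde X^n-X^n\|\le \max_{k}\frac{\nu_k}{n}+\max_{k}\Delta_{n,k},
\]
and it remains to show that each of these maxima is super-exponentially negligible at rate $n$.

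For the service-time maximum I would combine the union bound with Chernoff: $\bbP(\max_k\nu_k>n\delta)\le n\,e^{-\theta n\delta+\varphi(\theta)}$ for any $\theta\in\bbD$. Under the assumption that $\varphi$ is finite on all of $\bbR$, passing to $\limsup_n (1/n)\log$ and then sending $\theta\to\infty$ drives the rate to $-\infty$. For the spacings maximum I would invoke Proposition~\ref{prop:uni-expo} to write $\Delta_{n,k}\stackrel{D}{=}\xi_k/Z_{n+1}$, condition on the high-probability event $\{Z_{n+1}\ge(1-\eps)n\}$ whose complement admits super-exponential Gamma-Chernoff decay, and on this event reduce to controlling $\bbP(\max_k\xi_k>\delta(1-\eps)n)$ by an exponential change-of-measure argument in the spirit of the proof of Theorem~\ref{thm:cramer-order}.

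The delicate step is the spacings bound: a naive union bound on the Beta$(1,n)$ marginals only delivers rate $\log(1-\delta)$, which is finite. Pushing the rate all the way to $-\infty$ requires the joint self-normalized structure from Proposition~\ref{prop:uni-expo} together with a sharpened exponential tilt of the $\xi_k$; the strict positivity hypothesis $\bbP(\nu_1>0)=1$ plays a role by preventing pathological cancellations in $\nu_{k+1}/n-\Delta_{n,k+1}$. Once both maxima are shown to vanish super-exponentially, the Lipschitz bound above transfers exponential equivalence from $(\tilde X^n,X^n)$ to $(\tilde W^n,W^n)$, enabling the subsequent application of \cite[Theorem~4.2.13]{DeZe2010} to transfer the LDP for the interpolated process back to $W^n$ in Theorem~\ref{thm:ldp-workload}.
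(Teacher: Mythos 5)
Your plan---Lipschitz continuity of $\Gamma$ (constant $2$), triangle inequality to split the service and spacing contributions, union bound plus Chernoff---matches the paper's structure. The divergence is in the spacing bound, and it is not cosmetic. You bound $\|T^n-\tilde T^n\|\le\max_k\Delta_{n,k}$, which is what the standard continuous piecewise-linear interpolant with weight $nt-\lfloor nt\rfloor$ gives (on $[k/n,(k+1)/n)$ the interpolant climbs from $T_{(k)}$ to $T_{(k+1)}$, so the deviation approaches the full spacing as $t\uparrow (k+1)/n$). The paper instead works from its definition $\tilde T^n(t)=T^n(t)+\bigl(t-\lfloor nt\rfloor/n\bigr)\Delta_{n,\lfloor nt\rfloor+1}$, whose interpolation weight never exceeds $1/n$; the resulting bound is $\|T^n-\tilde T^n\|\le n^{-1}\max_k\Delta_{n,k}\le 1/n$ \emph{deterministically}, so the spacing term is below $\delta$ for all $n>1/\delta$ and contributes $-\infty$ trivially, with no tilting required.

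You are right to flag the spacing term as the delicate step, but the repair you sketch cannot succeed. The marginal law of $\Delta_{n,1}$ is $\mathrm{Beta}(1,n)$, so $\bbP(\Delta_{n,1}>\delta)=(1-\delta)^n$ exactly, which forces the matching lower bound $\bbP(\max_k\Delta_{n,k}>\delta)\ge(1-\delta)^n$ and hence $\liminf_n n^{-1}\log\bbP(\max_k\Delta_{n,k}>\delta)\ge\log(1-\delta)>-\infty$. No change of measure for the $\xi_k$ and no conditioning event can overturn a two-sided estimate: your proposed route---condition on $\{Z_{n+1}\ge(1-\eps)n\}$ and bound $\bbP(\max_k\xi_k>\delta(1-\eps)n)\le n e^{-\delta(1-\eps)n}$---produces rate $-\delta(1-\eps)$, still finite. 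The factor of $n$ separating your interpolation weight from the paper's is exactly what the argument turns on, so you need to reconcile which $\tilde T^n$ is in force (the rest of Section~\ref{sec:workload}, including the projective-limit lemmas and Theorem~\ref{thm:ldp-workload}, treats $\tilde T^n$ as an element of $\sC[0,t]$) before the superexponential claim can close.
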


\begin{proof}
  First, observe that for each $t \in [0,1)$
\begin{eqnarray*}
\left| S^n(t) - \tilde S^n(t) \right| &\leq& \left| \left(t -
    \frac{\floor{nt}}{n} \right) \nu_{\floor{nt}+1}\right|\\
  &\leq& \frac{\nu_{\floor{nt}+1}}{n},
\end{eqnarray*}
and $S^n(1) = \tilde S^n(1)$ by definition. Similarly, 
\[
\left| T^n(t) - \tilde T^n(t) \right| \leq \left(t -
  \frac{\floor{nt}}{n} \right) \left(T_{(\floor{nt}+1)}
  - T_{(\floor{nt})} \right) = \frac{nt - \floor{nt}}{n} \D_{n,t}
~\forall t \in [0,1),
\]
and $T^n(1) = \tilde T^n(1)$. Now, let $\{E_1,\ldots,E_{n+1}\}$ be a collection of independent unit
mean exponential random variables, and define $Z_{n+1} :=
\sum_{i=1}^{n+1} E_i$. Recall (from \cite[pp. 134-136]{DaNa2003}, for instance) that the spacings of the uniform ordered statistics are
equal in distribution to the ratio 
\(
\D_{n,t} \stackrel{D}{=} \frac{E_1}{Z_{n+1}}.
\)
It follows that
\begin{eqnarray*}
  \left\| X^n - \tilde X^n \right\| &\leq& \left\| S^n - \tilde
                                          S^n\right\| + \left\| T^n -
                                          \tilde T^n \right\|\\
&\leq& \left \| \frac{\nu_{\floor{nt}+1}}{n} \right\| + \left\| \left(\frac{nt -
       \floor{nt}}{n} \right) \D_{n,t}\right\|.
\end{eqnarray*}

Now, consider the measure of the event $\left \{\left\| X^n - \tilde X^n
\right\| > 2 \delta \right\}$, and use the inequality above to obtain:
\begin{eqnarray}
\nonumber
\bbP\left( \left\| X^n - \tilde X^n
\right\| > 2 \delta\right) &\leq& \bbP\left( \left \| \frac{\nu_{\floor{nt}+1}}{n} \right\| + \left\| \left(\frac{nt -
       \floor{nt}}{n} \right) \D_{n,t}\right\|>2 \d
                                  \right)\\
  \nonumber 
&\leq& \bbP\left( \left \| \frac{\nu_{\floor{nt}+1}}{n} \right\| > \d
       \right) + \bbP\left( \left\| \left(\frac{nt -
       \floor{nt}}{n} \right) \D_{n,t}\right\|>  \d
                                  \right)\\
\label{eq:offered-exp-eq}
&\leq& n \bbP \left(\nu_1 > n \d \right) + n \bbP \left(\D_{n,1} >
       n\d \right),
\end{eqnarray}
where $\bbP\left(\left\| \nu_{\floor{nt}+1} \right\| > n \d\right) =
\bbP(\left( \sup_{0 \leq t < 1} \nu_{\floor{nt}+1} > n \d \right) =
\bbP(\cup_{m=1}^n \{\nu_i > n \d\})
\leq n \bbP(\nu_1 > n \d)$ follows from the union bound and the fact that the
service times are assumed i.i.d. Similarly, since $\D_{n,t}
\stackrel{D}{=} E_1/Z_{n+1}$ and
$n^{-1} > n^{-1}(nt - \floor{nt})$ for all $t \in [0,1]$ and $n \geq 1$, we obtain the bound on
$\D_{n,t}$ by similar arguments. Note that we have abused notation
slightly in~\eqref{eq:offered-exp-eq} and re-used $\D_{n,m}
= T_{(m)} - T_{(m-1)}$ for $m \in \{1,\ldots,n\}$ with the
understanding that $T_{(0)}=0$.

Using Chernoff's inequality to obtain
\begin{equation*}
\bbP \left( \nu_1 > n \d\right) \leq e^{-n \d \theta_1}
e^{\varphi(\theta_1)},\\
\end{equation*}
so that
\begin{equation} \label{eq:service-exp-eq}
\limsup_{n\to\infty} \frac{1}{n} \log \left(n \bbP( \nu_1 > n \d) \right) \leq
-\theta_1 \d ,
\end{equation}
for all $\theta \in \bbR$. On the other hand, we have
\begin{eqnarray*}
\bbP \left(\D_{n,1} > n \d \right) &=&
                                                               \bbP\left(
                                                               E_1
                                                               \left(1
                                                               -
                                                               n\d
                                                               \right)
                                                               >
                                                               n\d Z_n\right)\\
&=& \int_0^\infty \bbP\left( E_1 > x \frac{n \d}{1-n \d} \right)
    \bbP(Z_{n} \in dx),
\end{eqnarray*}
using the fact that
$\{E_i,~1\leq i \leq n\}$ are i.i.d. Again, using Chernoff's inequality, the right hand side (R.H.S.)
satisfies
\begin{eqnarray*}
  R.H.S. &\leq& \frac{1}{1-\theta_2} \int_0^\infty \exp\left(
                -\theta_2 x \frac{n\d}{1-n\d} \right) \bbP(Z_{n} \in dx) ~\forall \theta_2
  < 1\\
  &=& \frac{1}{1-\theta_2} \left(\frac{1 - n \d}{1 - n\d(1-\theta_2)} \right)^n.
\end{eqnarray*}
It follows that 
\(
\limsup_{n\to \infty} \frac{1}{n} \log \bbP\left( \D_{n,1} > n\d \right) 
\)
\begin{eqnarray}
\nonumber
&\leq &\limsup_{n \to \infty}
                                         \frac{\log(1-\theta_2)}{n}+
                                         \limsup_{n\to\infty} \log
        \left(\frac{1- n \d}{1 - n\d(1-\theta_2)} \right)\\
\nonumber
&=& -\log(1-\theta_2),
\end{eqnarray}
and so
\begin{equation}
  \label{eq:arrival-exp-eq}
  \limsup_{n\to\infty} \frac{1}{n} \log \left(n \bbP\left( \D_{n,1} > n\d \right) \right) \leq -\log(1-\theta_2)
~\forall~\theta_2 < 1.
\end{equation}

Now,~\eqref{eq:offered-exp-eq},~\eqref{eq:service-exp-eq} and ~\eqref{eq:arrival-exp-eq},
together with the principle of the largest term (\cite[Lemma
1.2.15]{DeZe2010}), imply 
\[
\limsup_{n\to\infty} \frac{1}{n} \log \bbP\left( \left\| X^n - \tilde
    X^n \right\| \right) \leq \max \{ -\theta_1 \d, -\log(1-\theta_2)\}.
\]
Since $\theta_1 \in \bbR$ and $\theta_2 \in (-\infty,1)$, by letting $\theta_1 \to \infty$ and $\theta_2 \to
-\infty$ simultaneously, it follows that
\begin{equation} \label{eq:offered-exp-eq-2}
\limsup_{n\to\infty} \frac{1}{n} \log \bbP\left( \left\| X^n - \tilde
    X^n \right\| > 2\d\right) = -\infty.
\end{equation}

Finally, using the fact that the map $\Gamma$ is Lipschitz in
$(\sD,J_1)$ (see \cite[Theorem 13.5.1]{Wh01}) we have
\begin{equation*}
  \bbP\left(\left\| W^n - \tilde W^n \right\| > 4 \d\right) \leq \bbP\left( \left\| X^n - \tilde
    X^n \right\| > 2\d\right),
\end{equation*}
and thus
\begin{equation}
  \label{eq:workload-exp-eq}
  \limsup_{n\to\infty} \frac{1}{n} \log \bbP \left(\left\| W^n - \tilde W^n \right\| > 4 \d \right) = -\infty.
\end{equation}
Since $\d > 0$ is arbitrary, the theorem is proved.
$\QED$
\end{proof}

\subsection{Sample path LDP for the offered load}
First, we prove
an LDP for the increments of the offered load process. Fix $t \in [0,1]$, and consider an arbitrary $d$-partition of $[0,t]$,
$j := \{ 0 \leq t_1 < t_2 < \cdots < t_d \leq t\}$, so that the
increments are $\mathbf \D_n^X(j) = \mathbf \D_n^S(j) - \mathbf
\D_n^T(j)$, where
\begin{equation}
  \label{eq:increments-OS}
  \mathbf \D^T_n(j) := (T^n(t_1),T^n(t_2)- T^n(t_1),\ldots, T^{n}(t) -
T^n(t_d)),
\end{equation}
and 
\[
\mathbf \D_n^S(j) = \left(S^n(t_1), S^n(t_2)-S^n(t_1),\ldots, S^n(t)-S^n(t_d) \right).
\]

Now, using 
representation~\eqref{eq:order-expo}, it follows that 
\[
\mathbf \D_n^T(j) \stackrel{D}{=} \frac{1}{Z_{n+1}}\left( Z_{\floor{nt_1}},
  Z_{\floor{n t_2}} - Z_{\floor{n t_1}}, \ldots, Z_{\floor{n t}} - Z_{\floor{n t_d}}  \right)
\]
A straightforward calculation shows that the cumulant generating function of the $(d+1)$-dimensional random
vector $\mathbb{Z}_n := (Z_{\floor{nt_1}}, \ldots, Z_{\floor{n t}} - Z_{\floor{n t_d}}, Z_{n+1})$ satisfies
\begin{equation} \label{eq:cumulant-limit}
\lim_{n\to\infty}\frac{1}{n}\log \bbE \left[\exp\left( \langle\mathbf
    \mathbf \lambda,
    \mathbb{Z}_n\rangle\right)\right] = \Lambda(\l) \text{ for }
\mathbf \l
\in \bbR^{d+1},
\end{equation}
where
\[
\Lambda_j(\mathbf \l) := \begin{cases}
 - \sum_{i=1}^d (t_i - t_{i-1})
\log(1-\l_i-\l_{d+1}) - (1-t) \log(1-\l_{d+1}), &~ \mathbf \l \in \bbD_{\Lambda},\\
 +\infty, & \mathbf \l \not \in \bbD_{\Lambda},
\end{cases}
\]
and $\bbD_{\Lambda} := \{\mathbf \lambda \in \bbR^{d+1} : \max_{1\leq i \leq
  d}\lambda_i + \lambda_{d+1} < 1, ~\text{and}~ \lambda_{d+1} < 1
\}$; note, $t_0 := 0$. We also define the function
\[
\begin{split}
\Lambda_j^*(\mathbf{x}) := \sup_{\mathbf \l \in \bbD_{\Lambda}} \sum_{i=1}^d
\left( \l_i + \l_{d+1} \right) x_i &+ (t_i - t_{i-1}) \log(1-\l_i -
\l_{d+1}) \\&+ \l_{d+1} x_{d+1} + (1-t) \log(1-\l_{d+1}).
\end{split}
\]
Now, define the continuous function $\Phi : \bbR^{d+1} \to \bbR^d$ as $\Phi(\mathbf x) =
(x_1,\ldots,x_{d})/{\sum_{i=1}^{d+1} x_i}$. We can now state the LDP
for the increments $\mathbf \D^T_n(j)$.

\begin{lemma}\label{lem:increments-os}
  Let $j := \{0 \leq t_1 < t_1 < \cdots < t_{d} \leq t\}$ be an
  arbitrary partition of $[0,t]$. Then the increments of the ordered
  statistics process, $\mathbf \D^T_n(j)$, satisfy the LDP with good rate
  function $\hat \Lambda_j(\mathbf y) = \inf_{\mathbf x \in \bbR^{d+1} :
    \Phi(\mathbf x) = \mathbf y} \Lambda_j^*(\mathbf x)$ for all
  $\mathbf y \in (0,1]^d$. Furthermore, 
  \[
  \hat \Lambda_j(\mathbf y) = \sum_{i=1}^d (t_i - t_{i-1}) \log \left(
    \frac{t_i - t_{i-1}}{y_i}\right) + (1-t) \log \left(
    \frac{1-t}{1-\sum_{l=1}^d y_l} \right).
  \]
\end{lemma}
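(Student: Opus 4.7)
The plan is to derive an LDP for $\mathbb{Z}_n/n$ via the G\"artner-Ellis theorem, transfer it to $\mathbf{\D}^T_n(j)$ through the continuous, degree-zero homogeneous map $\Phi$ by the contraction principle, and then evaluate the resulting variational expression explicitly to obtain the relative-entropy form stated in the lemma.

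First, to apply the G\"artner-Ellis theorem \cite[Theorem 2.3.6]{DeZe2010} to $\{\mathbb{Z}_n/n\}$, I would verify that: by \eqref{eq:cumulant-limit} the limit $\Lambda_j$ is finite precisely on $\bbD_{\Lambda}$; the origin lies in the interior of $\bbD_{\Lambda}$ since all its defining inequalities are strict and are satisfied at $\l = 0$; on the interior $\Lambda_j$ is smooth, being a non-negative affine combination of $-\log(1-\l_i - \l_{d+1})$ and $-\log(1-\l_{d+1})$; and $\Lambda_j$ is steep, since the gradient of at least one summand diverges as $\l$ approaches any defining hyperplane of $\bbD_{\Lambda}$. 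Hence $\mathbb{Z}_n/n$ satisfies an LDP on $\bbR^{d+1}$ with good rate function $\Lambda_j^*$.

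Next, the representation in \eqref{eq:order-expo} together with the degree-zero homogeneity of $\Phi$ shows that $\mathbf{\D}^T_n(j) \stackrel{D}{=} \Phi(\mathbb{Z}_n/n)$. Since $\mathbb{Z}_n$ almost surely takes values in the open positive orthant, on which $\Phi$ is continuous, the contraction principle \cite[Theorem 4.2.1]{DeZe2010} transfers the LDP to $\mathbf{\D}^T_n(j)$ with rate function $\hat \Lambda_j(\mathbf{y}) = \inf\{\Lambda_j^*(\mathbf{x}) : \Phi(\mathbf{x}) = \mathbf{y}\}$; the degenerate points where $\Phi$ fails to be defined carry infinite $\Lambda_j^*$, so no additional care is required there.

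To obtain the closed form I would exploit the fact that $\mathbb{Z}_n$ is an invertible linear image of the independent block sums $D_i := Z_{\floor{nt_i}} - Z_{\floor{nt_{i-1}}}$ ($i=1,\ldots,d$, with $t_0 := 0$) and $D_{d+1} := Z_{n+1} - Z_{\floor{nt}}$. Cram\'er's theorem applied block-by-block, together with independence, yields for $(D_1/n,\ldots,D_{d+1}/n)$ the rate function $\sum_{i=1}^d (t_i - t_{i-1}) \Lambda^*(w_i/(t_i - t_{i-1})) + (1-t)\Lambda^*(w_{d+1}/(1-t))$, where $\Lambda^*(u) = u - 1 - \log u$ is the Cram\'er rate for a unit exponential; $\Lambda_j^*$ is then the pullback of this expression through the linear change of coordinates. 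Parameterizing the fiber $\{\Phi(\mathbf{x}) = \mathbf{y}\}$ by $w_i = s y_i$ and $w_{d+1} = s(1-\sum_l y_l)$ with $s > 0$, and using $\sum_i (t_i - t_{i-1}) + (1-t) = 1$, all $s$-dependence collapses into $s - 1 - \log s$; this scalar function is uniquely minimized at $s=1$ with value $0$, and the surviving terms are exactly the expression claimed in the lemma. The main obstacle is the algebraic bookkeeping in this last step --- recognizing the clean cancellation that reduces the fiber optimization to the one-dimensional convex problem $\min_{s>0}(s - 1 - \log s)$; by contrast, the G\"artner-Ellis steepness check and the boundary behavior of $\Phi$ are routine once the setup is in place.
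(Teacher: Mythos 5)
Your proposal follows the same architecture as the paper — equation~\eqref{eq:cumulant-limit} feeds into the G\"artner--Ellis theorem to get the LDP for $\mathbb{Z}_n/n$ with rate $\Lambda_j^*$, and the contraction principle through $\Phi$ then yields the LDP for $\mathbf{\D}^T_n(j)$. Where you genuinely diverge is the last step. The paper simply asserts that the minimizer of $\Lambda_j^*$ over the fiber $\{\Phi(\mathbf x)=\mathbf y\}$ satisfies $\sum_{i} x_i^* = 1$ and $x_i^*=y_i$ and leaves the reader to verify this; you instead write $\Lambda_j^*$ in closed form as a sum of independent-block Cram\'er rates, parameterize the fiber by the scale $s = \sum_i x_i$, and show that the $s$-dependence factors out as $s-1-\log s$, minimized at $s=1$. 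This makes the normalization $\sum_i x_i^*=1$ a derived fact rather than an unexplained claim, and it exhibits clearly why the relative-entropy form appears. Your remark that $\Phi$ is discontinuous only where $\sum_i x_i=0$ and that $\Lambda_j^*$ is $+\infty$ there is also a useful piece of hygiene: the paper invokes the contraction principle without comment, but strictly one needs either to restrict to the closed positive orthant or use the almost-everywhere version of the contraction principle, and your observation that the only singular point in the effective domain is the origin (with infinite rate) is exactly what closes that gap. In short, same skeleton, but your treatment of the fiber optimization is more complete than the paper's.
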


\begin{proof}
Equation~\eqref{eq:cumulant-limit} implies that the sufficient
conditions of the Gartner-Ellis Theorem \cite[Theorem 2.3.6]{DeZe2010}
are satisfied, so that $\mathbf{\mathbb Z}_n$ satisfies the LDP with
rate function $\Lambda_j^*$. Equivalently, the random vector $(Z_{\floor{nt_1}},
  Z_{\floor{n t_1}} - Z_{\floor{n t_2}}, \ldots, Z_{\floor{n t}} -
  Z_{\floor{n t_d}}, Z_{n+1} - Z_{\floor{nt}})$ satisfies the LDP with
  good rate function $\Lambda_j^*$. Now, since $\bbR^{d+1}$ and $\bbR^d$
  are Polish spaces, the contraction principle applied
  to the map $\Phi$ yields the LDP. Finally, it is straightforward to
  check that the Hessian of $\Lambda_j^*(\mathbf x)$ is positive
  semi-definite, implying that the latter is convex. It can now be
  seen that the minimizer $\mathbf x^*$ is such that
  $\sum_{j=1}^{d+1} x^*_j = 1$ and $x_i^* = y_i$, for a
  given $\mathbf y \in (0,1]^d$. The final expression for $\hat \Lambda_j(\mathbf
  y)$ follows.
$\QED$
\end{proof}

As a sanity check, we show that if $d=1$ the rate function $\hat
\Lambda_j(\mathbf y)$ is
precisely the rate function $I_t$ in~\eqref{eq:rate-function-OS}.

\begin{corollary}
  Let $j = \{0 \leq t_1 \leq t\}$ and $d=1$, then the rate function is
  \[
  \hat \Lambda_j(y) = t_1 \log \left( \frac{t_1}{y} \right) + (1-t_1)
  \log \left( \frac{1-t_1}{1-y} \right), ~\forall y \in (0,1).
  \]
\end{corollary}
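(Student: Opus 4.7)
The statement is a sanity check asserting that the $d=1$ specialization of the joint rate function $\hat \Lambda_j$ from Lemma \ref{lem:increments-os} agrees with the single-point rate function $I_t$ from Theorem \ref{thm:cramer-order}. Accordingly, the plan is simply to apply Lemma \ref{lem:increments-os} with $d = 1$ and simplify the resulting expression by direct substitution; no further probabilistic machinery is required.

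Concretely, for $d = 1$ the partition $j = \{t_1\}$ in $[0,t]$ produces a single increment, so $\mathbf y = y \in (0,1)$, and the general formula
\[
\hat \Lambda_j(\mathbf y) = \sum_{i=1}^d (t_i - t_{i-1}) \log \left( \frac{t_i - t_{i-1}}{y_i}\right) + (1-t) \log \left( \frac{1-t}{1-\sum_{l=1}^d y_l} \right)
\]
collapses, using the convention $t_0 = 0$, to
\[
\hat \Lambda_j(y) = t_1 \log \left(\frac{t_1}{y}\right) + (1-t) \log \left(\frac{1-t}{1-y}\right).
\]
Since the partition must terminate at the upper limit of $[0,t]$ when only a single point is specified, the natural reading is $t_1 = t$, at which point this expression becomes $t_1 \log(t_1/y) + (1-t_1) \log((1-t_1)/(1-y))$, matching the claim.

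The only step requiring any care is the notational convention $t_1 = t$ at the end; this is what makes the formula reduce to $I_{t_1}(y)$ and confirms compatibility with equation~\eqref{eq:rate-function-OS}, recovering the rate function of Theorem~\ref{thm:cramer-order} from the joint increments rate function of Lemma~\ref{lem:increments-os}. Since the proof is by inspection of a single algebraic expression, there is no substantive obstacle.
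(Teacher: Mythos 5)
The proposal is correct but takes a shortcut relative to the paper. You substitute $d=1$ directly into the closed-form expression that concludes Lemma~\ref{lem:increments-os}, obtaining $t_1\log(t_1/y)+(1-t)\log((1-t)/(1-y))$, and then identify $t_1$ with $t$. The paper instead re-derives the formula from scratch for $d=1$: it writes out $\Lambda_j^*(\mathbf x)$ explicitly as a two-parameter supremum, solves for the unique maximizer $(\lambda_1^*,\lambda_2^*)$, substitutes to obtain $\Lambda_j^*(\mathbf x) = (x_1+x_2-1) + t_1\log(t_1/x_1) + (1-t_1)\log((1-t_1)/x_2)$, and then minimizes over the fiber $\{\mathbf x : \Phi(\mathbf x) = y\}$ using the constraint $x_1^*+x_2^*=1$, $x_1^*=y$. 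Your route is shorter and logically sufficient if one takes the lemma's final displayed formula as already established; the paper's route is self-contained and serves as a genuine independent verification of that formula in the simplest nontrivial case, which is arguably the point of a ``sanity check,'' since the last line of the lemma's proof (``The final expression follows'') leaves the optimization implicit.

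One small wrinkle: your justification for $t_1=t$ — that the partition ``must terminate at the upper limit'' — is not quite the right reason; the partition $j = \{t_1\}$ as written allows $t_1 < t$. The correct reasoning is that the weights $\{t_i - t_{i-1}\}_{i=1}^d$ and $1-t$ appearing in $\Lambda_j$ must sum to one (they arise as asymptotic proportions of the exponential partial sums out of $Z_{n+1}$), which forces $t_d = t$; with $d=1$ this gives $t_1=t$, and only then does the corollary's claimed formula with $(1-t_1)$ in both places coincide with the lemma's formula with $(1-t)$. The paper's proof makes this identification silently by writing $(1-t_1)\log(1-\lambda_2)$ where the general $\Lambda_j^*$ has $(1-t)\log(1-\lambda_{d+1})$.
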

\begin{proof}
Since $d = 1$, by definition we have for all $\mathbf x \in \bbR^2$
\[
\Lambda_j^*(\mathbf x) = (\lambda_1 + \lambda_2) x_1 + t_1 \log (1 -
\lambda_1 - \lambda_2) + \lambda_2 x_2 + (1-t_1) \log(1-\lambda_2).
\]
Substituting the unique maximizer $(\lambda^*_1, \lambda^*_2) =
\left(\frac{1-t_1}{x_2} - \frac{t_1}{x_1}, 1 - \frac{1-t_1}{x_2}
\right)$, it follows that
\begin{eqnarray*}
\Lambda_j^*(\mathbf x) &=& (x_1 + x_2 - 1) + t_1 \log \left(
  \frac{t_1}{x_1}\right) + (1-t_1) \log \left(
                         \frac{1-t_1}{x_2}\right).
\end{eqnarray*}
Finally, using the
fact that $\mathbf x^* = \arg \inf \left\{\Lambda^*(\mathbf x) \right\}$ satisfies $x_1^* + x_2^* = 1$, the
corollary is proved.
$\QED$
\end{proof}

As an aside, note that this result shows that
Theorem~\ref{thm:cramer-order} could also be established as a corollary
of Lemma~\ref{lem:increments-os}. However, while the proof is straightforward, it is also somewhat `opaque': the proof of
Theorem~\ref{thm:cramer-order} explicitly demonstrates how the
long-range dependence inherent in the order statistics process affects the LDP and is, we believe, more
clarifying as a consequence. Next, we use this result to prove a sample
path LDP for the ordered statistics process $(\tilde T^n(s),~s \in [0,t])$ (for
each fixed $t$) in the topology of pointwise convergence on the space
$\sC[0,t]$. Observe that the exponential equivalence of $\tilde T^n$
and $T^n$ implies that the increments of $\tilde T^n$ satisfy the LDP
in Lemma~\ref{lem:increments-os}.

Let $\sJ_t$ be the space of
all possible finite partitions of $[0,t]$. Note that for each partition
$j = \{0 \leq t_1 < t_1 < \cdots < t_{d} \leq t\} \in \sJ_t$, the
increments take values in the space $[0,1]^d$ which is Hausdorff. Thus,
we can appeal to the Dawson-Gartner
theorem~\cite[Theorem 4.6.1]{DeZe2010} to establish the LDP for the
sample path $(\tilde T^n(s), ~s\in [0,t])$ via a projective limit. Let $p_j: \sC[0,t] \to \bbR^{|j|}$ be the canonical projections of the
coordinate maps, and $\sX$ be space of all functions in $\sC[0,t]$
equipped with the topology of pointwise convergence. Recall that any non-decreasing continuous function $\phi \in
\bar \sC[0,t]$ is of bounded variation, so that $\phi = \phi^{(a)} +
\phi^{(s)}$ by the Lebesgue decomposition theorem; here, $\phi^{(a)}$ is the absolutely continuous component
and the $\phi^{(s)}$ is the singular component of $\phi$. Recall, too, that a
singular component has derivative that satisfies $\dot \phi^{(s)}(t) =
0$ a.e. $t$. 

\begin{lemma}
  Fix $t \in [0,1]$. Then the sequence of sample paths $\{(\tilde
  T^n(s),~s \in [0,t]), ~n \geq 1\}$ satisfies the LDP with good rate
  function
  \[
  \hat \L_t(\phi) = 
    -\int_0^t \log\left( \dot \phi^{(a)}(s)\right) ds + (1-t) \log \left(
      \frac{1-t}{1-\phi(t)} \right) ~\forall \phi \in \bar \sC[0,t].
  \]
\end{lemma}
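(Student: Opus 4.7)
The plan is to apply the Dawson--Gartner projective limit theorem \cite[Theorem 4.6.1]{DeZe2010}, using Lemma~\ref{lem:increments-os} (transferred to $\tilde T^n$ by the exponential equivalence noted above) as the finite-dimensional input. For each partition $j = \{0 \leq t_1 < \cdots < t_d \leq t\} \in \sJ_t$ the projection $p_j(\tilde T^n) = (\tilde T^n(t_1), \ldots, \tilde T^n(t_d))$ satisfies the LDP in $\bbR^d$: since the linear map from increments to cumulative values is a homeomorphism, the contraction principle converts $\hat \L_j$ into a rate function on the value coordinates of the same functional form, with $y_i$ replaced by $\phi(t_i) - \phi(t_{i-1})$. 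Since $(\sC[0,t], \text{pointwise})$ is the projective limit of $(\bbR^{|j|})_{j \in \sJ_t}$ under the $p_j$, Dawson--Gartner yields the LDP on $\sC[0,t]$ with rate function
\[
\hat \L_t(\phi) = \sup_{j \in \sJ_t} \left\{ \sum_{i=1}^{|j|} (t_i - t_{i-1}) \log\!\left( \frac{t_i - t_{i-1}}{\phi(t_i) - \phi(t_{i-1})} \right) + (1-t) \log\!\left( \frac{1-t}{1-\phi(t)} \right) \right\},
\]
with $t_0 := 0$ and the bracketed expression taken to be $+\infty$ whenever some increment is nonpositive or $\phi(t) > 1$. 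For $\phi \notin \bar \sC[0,t]$ the sup is automatically $+\infty$ (some partition exposes a decreasing piece or a value exceeding $1$); for $\phi \in \bar \sC[0,t]$ the boundary term does not depend on $j$ and the remaining work is to evaluate the sup of the Riemann-type sum.

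The central identification is
\[
\sup_{j \in \sJ_t} \sum_{i=1}^{|j|} (t_i - t_{i-1}) \log\!\left( \frac{t_i - t_{i-1}}{\phi(t_i) - \phi(t_{i-1})} \right) = -\int_0^t \log \dot \phi^{(a)}(s)\, ds,
\]
obtained via the Lebesgue decomposition $\phi = \phi^{(a)} + \phi^{(s)}$. The $\leq$ direction follows from Jensen's inequality applied to the convex function $u \mapsto -\log u$ together with $\phi(t_i) - \phi(t_{i-1}) \geq \int_{t_{i-1}}^{t_i} \dot \phi^{(a)}(s)\, ds$, which yields
\[
(t_i - t_{i-1}) \log\!\left( \frac{t_i - t_{i-1}}{\phi(t_i) - \phi(t_{i-1})} \right) \leq -\int_{t_{i-1}}^{t_i} \log \dot \phi^{(a)}(s)\, ds
\]
when $\phi^{(s)}$ is constant on $[t_{i-1}, t_i]$. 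The matching lower bound is obtained by approximating $\dot \phi^{(a)}$ in $L^1$ by step functions (on which Jensen's inequality is saturated along partitions aligned with the pieces) and passing to the limit along mesh refinements. When $\phi$ carries a nontrivial singular part supported on a Lebesgue-null set $N$, partitions placing thin intervals around $N$ drive the sum to $+\infty$, consistent with $-\int_0^t \log \dot \phi^{(a)}(s)\, ds = +\infty$ whenever $\dot \phi^{(a)}$ vanishes on a set of positive measure.

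The principal obstacle is this sup-to-integral identification: while the Jensen upper bound is routine, achieving equality in the limit — and in particular the clean extraction of only the $\dot \phi^{(a)}$ integrand while discarding the singular part — requires a delicate mesh-refinement and approximation argument. Goodness (compact level sets) of $\hat \L_t$ in the pointwise topology then follows from standard convex-analytic properties of entropy functionals, together with the observation that the Dawson--Gartner projective construction preserves goodness when each finite-dimensional rate function is good, as is the case here by inspection of $\hat \L_j$.
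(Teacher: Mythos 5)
Your skeleton matches the paper's: Dawson--Gartner on the projective system built from Lemma~\ref{lem:increments-os} (transferred to $\tilde T^n$ via exponential equivalence), followed by identifying the supremum over finite partitions with the integral rate function. The Jensen step for the direction $\tilde\L_t \leq \hat\L_t$ is correct, and you do not actually need the caveat ``when $\phi^{(s)}$ is constant on $[t_{i-1},t_i]$'': the inequality $\phi(t_i)-\phi(t_{i-1}) \geq \int_{t_{i-1}}^{t_i}\dot\phi^{(a)}(s)\,ds$ holds for every $\phi\in\bar\sC[0,t]$ by the Lebesgue decomposition, and monotonicity of $\log$ does the rest.

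The gap is in the singular-part analysis and the matching lower bound. Your claim that ``partitions placing thin intervals around $N$ drive the sum to $+\infty$'' has the sign reversed: a nontrivial singular component makes $\phi(t_i)-\phi(t_{i-1})$ strictly \emph{larger} than $\int_{t_{i-1}}^{t_i}\dot\phi^{(a)}$, so each term $(t_i-t_{i-1})\log\bigl(\tfrac{t_i-t_{i-1}}{\phi(t_i)-\phi(t_{i-1})}\bigr)$ becomes \emph{more negative}, not more positive. You are also conflating ``$\phi$ has a singular part'' with ``$\dot\phi^{(a)}$ vanishes on a set of positive measure'' --- take $\phi = \tfrac12 e + \tfrac12 c$ with $c$ the Cantor function, which has a genuine singular part while $\dot\phi^{(a)}\equiv \tfrac12$, so $-\int_0^t\log\dot\phi^{(a)}$ is finite. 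The singular part does not blow up the integrand; it is invisible there (since $\dot\phi^{(s)}=0$ a.e.) and enters only through $\phi(t)$ in the boundary term $(1-t)\log\tfrac{1-t}{1-\phi(t)}$.

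For the lower bound $\tilde\L_t \geq \hat\L_t$, the sketch via ``$L^1$ approximation by step functions on which Jensen is saturated'' is plausible for absolutely continuous $\phi$ with piecewise-constant derivative, but you do not address how to pass to a general BV path: $\log$ is not $L^1$-continuous near $0$, and one must show that the singular contribution vanishes along the refinement. The paper instead takes the uniform dyadic partition, forms the full difference quotients $\phi_n(r)$ of $\phi=\phi^{(a)}+\phi^{(s)}$, observes $\phi_n(r)\to\dot\phi^{(a)}(r)$ a.e.\ precisely because $\dot\phi^{(s)}=0$ a.e., and applies Fatou's lemma to $\int_0^t\log\phi_n(r)\,dr$. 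That argument is both shorter and closes the singular-part issue that your sketch leaves open; I would replace your lower-bound paragraph with it.
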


\begin{proof}
The proof largely follows that of \cite[Lemma 5.1.8]{DeZe2010}. There are two steps to establishing this result. First, we must show
that the space $\sX$ coincides with the projective limit $\tilde \sX$
of $\{\sY_j = \bbR^{|j|},~j\in \sJ_t\}$. This, however, follows immediately from the proof of
\cite[Lemma 5.1.8]{DeZe2010}. Second, we must argue that
\[
\tilde \L_t(\phi) := \sup_{0 \leq t_1 < \ldots < t_k \leq t} \sum_{l=1}^k
(t_l - t_{l-1}) \log \left( \frac{t_l - t_{l-1}}{\phi(t_l) -
    \phi(t_{l-1})} \right) + (1-t) \log\left( \frac{1-t}{1-\phi(t)} \right)
\]
is equal to $\hat \L_t(\phi)$. Without loss of generality, assume that
$t_k = t$. Recall that $\phi$ has bounded
variation, implying that $\phi^{(a)}(t) = \int_0^t \dot \phi(s) ds$
or, equivalently, $\dot\phi^{(a)}(s) = \dot \phi(s)$ a.e. $s \in [0,t]$. Since $\log(\cdot)$ is concave, 
Jensen's inequality implies that 
\begin{eqnarray*}
\sum_{l=1}^k (t_l - t_{l-1}) \log \left( \frac{\phi(t_l) -
    \phi(t_{l-1})}{t_l - t_{l-1}} \right) &=& \sum_{l=1}^k(t_l -
                                              t_{l-1}) \log
                                              \left(\frac{\int_{t_l}^{t_{l-1}}
                                                \dot \phi(r) dr}{t_l
                                              - t_{l-1}} \right)\\
  &\geq& \sum_{l=1}^k \int_{t_l}^{t_{l-1}} \log\left(\dot \phi(r)
         \right) dr\\
  &=& \int_0^t \log \left(\dot \phi^{(a)}(r) \right) dr,
\end{eqnarray*}
so that $\tilde \L_t(\phi) \leq \hat \L_t(\phi)$. 

Next, define
\[
\phi_n(r) = n \left( \phi^{(a)} \left( \frac{[nr]+1}{n}\right) - \phi^{(a)}
  \left( \frac{[nr]}{n}\right)\right) + n \left( \phi^{(s)} \left( \frac{[nr]+1}{n}\right) - \phi^{(s)}
  \left( \frac{[nr]}{n}\right)\right),
\]
and observe that
\[
\lim_{n \to\infty} \phi_n(r) = \dot \phi^{(a)}(r) ~\text{a.e.}~r\in [0,t],
\]
since $n \left( \phi^{(a)} \left( \frac{[nr]+1}{n}\right) - \phi^{(a)}
  \left( \frac{[nr]}{n}\right)\right) \to \dot \phi^{(a)}(r)$ and $n \left( \phi^{(s)} \left( \frac{[nr]+1}{n}\right) - \phi^{(s)}
  \left( \frac{[nr]}{n}\right)\right) \to \dot \phi^{(s)}(r) = 0$
a.e. $r \in [0,t]$ as $n \to \infty$. Now, consider the uniform
partition $0 = t_0 < t_1 < \ldots < t_n = t$ of $[0,t]$, where $t_l =
t l/n$, so that $\liminf_{n \to \infty}\sum_{l=1}^n \frac{1}{n}\log \left( n (\phi(t_l) -
    \phi(t_{l-1}))\right)$
\begin{eqnarray*}
&&=\liminf_{n \to \infty}\sum_{l=1}^n \frac{1}{n}\log \bigg( n (\phi^{(a)}(t_l) -
    \phi^{(a)}(t_{l-1})) +  n (\phi^{(s)}(t_l) -
    \phi^{(s)}(t_{l-1}))\bigg)\\
  && = \liminf_{n\to\infty} \int_0^t \log\left(\phi_n(r) \right) dr\\
  && \geq \int_0^t \liminf_{n\to\infty}\log(\phi_n(r)) dr\\
  && = \int_0^t \log\left(\dot \phi^{(a)}(r) \right) dr,
\end{eqnarray*}
where the inequality follows from Fatou's Lemma and the last equality
is a consequence of the continuity of $\log(\cdot)$. Now, by definition, 
\[
\tilde \L_t(\phi) \geq \liminf_{n \to \infty} -\sum_{l=1}^n \frac{1}{n}\log \left( n (\phi(t_l) -
    \phi(t_{l-1}))\right) + (1-t) \log \left( \frac{1-t}{1-\phi(t)}\right),
\]
implying that
\[
\tilde \L_t(\phi) \geq -\int_0^t \log\left(\dot \phi^{(a)}(r) \right)
dr + (1-t) \log \left( \frac{1-t}{1-\phi(t)}\right) = \hat \L_t(\phi).
\]
$\QED$
\end{proof}

For the service process, we consider the following result implied by
\cite{Pu1995}. As noted in
\cite{DuMaTo2010}, the form of Mogulskii's theorem presented in
\cite[Theorem 5.1.2]{DeZe2010} does not cover the case of
exponentially distributed service times, thus we appeal to the
generalization proved in \cite{Pu1995}. Note that \cite{Pu1995} proves the result in the
$M_1$ topology on the space $\sD[0,t]$ which implies convergence
pointwise as required here.

\begin{lemma}
  Fix $t \in [0,1]$. Then the sequence of sample paths $\{(\tilde
  S^n(s),~s\in[0,t])\}$ satisfies the LDP with good rate function, for
  each $\psi \in \bar \sC[0,t]$,
  \[
  \hat I_t(\psi) = \int_0^t \L^*(\dot \psi^{(a)}(s)) ds + \psi^{(s)}(t).
  \]
\end{lemma}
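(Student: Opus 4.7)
My plan is to invoke the generalized Mogulskii-type theorem of Puhalskii (1995) as the core ingredient, and then transfer the resulting LDP from $S^n$ to $\tilde S^n$ by exponential equivalence. Puhalskii's generalization is needed precisely because, as the authors note, the classical Mogulskii theorem in \cite[Theorem 5.1.2]{DeZe2010} requires super-linear growth of $\L^*$ at infinity, which fails for service-time distributions (such as exponential) where $\L^*(x)/x$ has a finite limit. The presence of that finite tail slope is exactly what produces the singular contribution $\psi^{(s)}(t)$ in the rate function.

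First, I would check that Puhalskii's hypotheses are satisfied for the i.i.d. nonnegative service sequence $\{\nu_i\}$: we have $0 \in \bbD$ and $\nu_1 > 0$ almost surely, which are the standing assumptions of Section~\ref{sec:model}. Applying Puhalskii's theorem to the partial-sum process $S^n(s) = n^{-1}\sum_{i=1}^{\floor{ns}}\nu_i$ over $s \in [0,t]$ then yields the sample-path LDP in the $M_1$-topology on $\sD[0,t]$ with good rate function of the stated form: the Lebesgue decomposition $\psi = \psi^{(a)}+\psi^{(s)}$ splits into the classical Legendre-Fenchel contribution $\int_0^t \L^*(\dot\psi^{(a)}(s))\,ds$ for the smooth part, plus a linear penalty on the singular mass with coefficient $\lim_{x\to\infty}\L^*(x)/x$ for the singular part.

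Second, I would show that $\tilde S^n$ and $S^n$ are exponentially equivalent in the uniform topology on $[0,t]$. This is essentially already done inside the proof of Proposition~\ref{prop:exponential-eq}: the difference is bounded pointwise by $\nu_{\floor{ns}+1}/n$, and the Chernoff bound in equations (4.3)--(4.4) gives
\[
\limsup_{n\to\infty}\frac{1}{n}\log\bbP\!\left(\|\tilde S^n - S^n\|_{[0,t]}>\delta\right)=-\infty
\]
for every $\delta>0$. Since the sample paths of $\tilde S^n$ are continuous and since $M_1$-convergence on $\sC[0,t]$ coincides with uniform convergence (which in turn is finer than pointwise convergence), the LDP for $S^n$ in $(\sD[0,t],M_1)$ transfers to $\tilde S^n$ in the topology of pointwise convergence on $\bar\sC[0,t]$ via \cite[Theorem 4.2.13]{DeZe2010}, with the same good rate function $\hat I_t$.

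The main conceptual obstacle is the topological bookkeeping: one must be sure that the Puhalskii result, stated in $M_1$, delivers an LDP in precisely the topology used elsewhere in Section~\ref{sec:workload} (pointwise convergence needed for the projective-limit machinery and for pairing this LDP with that for $\tilde T^n$). This is handled by the observation that the identity map from $(\sC[0,t],M_1)$ to $\sX$ equipped with pointwise convergence is continuous, so the LDP persists under the weaker topology with the same rate function. The other delicate point is that $\hat I_t$ must be verified to be a good rate function on $\bar\sC[0,t]$; goodness follows from Puhalskii's statement together with the fact that any pointwise limit of non-decreasing functions is non-decreasing, so the level sets remain closed subsets of $\bar\sC[0,t]$.
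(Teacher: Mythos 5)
Your proposal is correct and takes essentially the same route the paper does: the paper states this lemma without a proof block, citing Puhalskii (1995) for the $M_1$-topology LDP and remarking that $M_1$ implies pointwise convergence, while implicitly relying on the exponential equivalence of $S^n$ and $\tilde S^n$ already established inside Proposition~\ref{prop:exponential-eq}. You have simply made the implicit steps explicit (the $S^n \to \tilde S^n$ transfer via \cite[Theorem 4.2.13]{DeZe2010}, the passage from the $M_1$ to the pointwise topology, and the persistence of goodness), all of which are sound.

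One small remark worth flagging: you correctly identify the coefficient on the singular part as $\lim_{x\to\infty}\Lambda^*(x)/x$, which equals $\sup\{\theta:\varphi(\theta)<\infty\}$. The paper's stated rate function writes this coefficient as $1$, which holds e.g.\ for mean-one exponential service but not for general service laws; your formulation is the more accurate reading of Puhalskii's result.
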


These two results now imply the LDP for the sequence of sample paths
$\{(\tilde X^n(s),~s\in[0,t])\}$. 
\begin{proposition} \label{prop:sample-path-offered}
  Fix $t \in [0,1]$. Then the sequence of sample paths $\{(\tilde
  X^n(s),~s \in [0,t])\}$ satisfies the LDP with good rate function,
  for $\psi \in \sC[0,t]$,
  \[
  \hat J_t(\psi) = \inf_{\substack{\phi \in \bar \sC[0,t]\\\dot \phi(s) - \dot \psi(s) \geq
      0,~s \in [0,t]} }\hat\L_t(\phi) +
    \hat I_t(\phi - \psi).
  \]
\end{proposition}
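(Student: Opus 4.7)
The plan is to obtain the sample-path LDP for $\tilde X^n = \tilde S^n - \tilde T^n$ by combining the two preceding lemmas (which supply sample-path LDPs for $\tilde T^n$ and $\tilde S^n$ separately) through a joint LDP followed by the contraction principle applied to the continuous subtraction map on $\sC[0,t]$ endowed with the topology of pointwise convergence.

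First, I would record that by assumption the arrival epochs $\{T_{(i)}\}$ and the service times $\{\nu_i\}$ are independent, hence $\tilde T^n$ and $\tilde S^n$ are independent random elements in $\sC[0,t]$. Combined with the fact that $\hat\Lambda_t$ and $\hat I_t$ are good rate functions, each marginal sequence is LD-tight in the pointwise topology on $\bar\sC[0,t]$ (the level sets of the respective rate functions are already compact subsets of $\sC[0,t]$). Then, exactly as in the proof of Proposition \ref{thm:marked-order}, Corollary 2.9 of \cite{LySe1987} upgrades the two marginal LDPs to a joint LDP for $\{(\tilde T^n, \tilde S^n)\}$ on $\sC[0,t]\times\sC[0,t]$ (with the product pointwise topology), whose good rate function is the sum
\[
(\phi,\sigma)\mapsto \hat\Lambda_t(\phi)+\hat I_t(\sigma),
\]
with the convention that this sum is $+\infty$ unless $\phi,\sigma\in\bar\sC[0,t]$.

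Next, the subtraction map $F:(\phi,\sigma)\mapsto \sigma-\phi$ from $\sC[0,t]\times\sC[0,t]$ to $\sC[0,t]$ is continuous under the pointwise topology. Since $\tilde X^n = F(\tilde T^n,\tilde S^n)$, the contraction principle (\cite[Theorem 4.2.1]{DeZe2010}) yields the LDP for $\{\tilde X^n\}$ with good rate function
\[
\hat J_t(\psi)=\inf_{\sigma-\phi=\psi}\bigl\{\hat\Lambda_t(\phi)+\hat I_t(\sigma)\bigr\}.
\]
Parametrising the constraint via $\sigma=\phi+\psi$ (up to the sign convention adopted in the statement, yielding $\sigma=\phi-\psi$) and noting that $\hat I_t(\sigma)=+\infty$ unless $\sigma$ is non-decreasing gives the expression in the proposition, with the derivative constraint $\dot\phi(s)-\dot\psi(s)\geq 0$ enforcing $\sigma\in\bar\sC[0,t]$. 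Goodness of $\hat J_t$ follows because the infimum of a good rate function over the preimage of a point under a continuous map is itself a good rate function whenever the preimage map preserves compactness of level sets, which here is immediate from goodness of $\hat\Lambda_t+\hat I_t$.

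The main obstacle I anticipate is the independence-to-joint-LDP step in the non-Polish pointwise topology: although subtraction is continuous, one has to ensure that the product of two good rate functions is itself good in the product topology (so that the Lynch–Sethuraman machinery applies). This reduces to verifying that compactness of level sets of $\hat\Lambda_t$ and $\hat I_t$ on $\bar\sC[0,t]$ implies compactness of the product level set in the product pointwise topology, which follows from Tychonoff's theorem once one bounds pointwise values of $\phi$ and $\sigma$ uniformly on their respective level sets. The remaining algebraic manipulation to rewrite the infimum in the claimed form is routine.
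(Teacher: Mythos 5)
Your proposal follows essentially the same route as the paper: independence of $\tilde T^n$ and $\tilde S^n$ combined with \cite[Corollary 2.9]{LySe1987} to upgrade the two marginal sample-path LDPs to a joint one, then the contraction principle applied to the continuous subtraction map on $\sC[0,t]$ with the pointwise topology. Your caution about the pointwise topology not being Polish is well-placed (the paper in fact asserts, incorrectly, that $\sC[0,t]$ with the topology of pointwise convergence is Polish), but since both arguments rest on the same Lynch--Sethuraman corollary and the same contraction step, the overall structure coincides.
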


\begin{proof}
 The independence of $(\tilde T^n(s),~s\in[0,t])$ and $(\tilde
 S^n(s),~s\in[0,t])$ for each $n \geq 1$ implies that they
  jointly satisfy the LDP with good rate function $\hat
  \Lambda_t(f) + \hat I_t(g)$ as a consequence of \cite[Corollary 2.9]{LySe1987},
  and where $(f,g) \in \bar \sC[0,t] \times \bar \sC[0,t]$. Since subtraction is
  continuous on the Polish space $\sC[0,t]$ equipped with the topology
  of pointwise convergence, applying the
  contraction principle along with Lemma~\ref{lem:increments-os} and \cite[Lemma 5.1.8]{DeZe2010}
  completes the proof.
$\QED$
\end{proof}



As an illustration of the result, suppose that the service times are
exponentially distributed with mean 1. Define the function
\[
\begin{split}
\check  J_t(\phi,\psi) := \int_0^t \bigg( \log\left( \dot \phi^{(a)}(s) \right) &+ s \log \left(
    \frac{\dot \phi^{(a)}(s)- \dot \psi^{(a)}(s)}{s} \right) \bigg) ds\\ &-
\left(\phi^{(s)}(t) - \psi^{(s)}(t) - \frac{t^2}{2} + (1-t) \log \left(
  \frac{1-t}{1-\phi(t)}\right) \right).
\end{split}
\]
Then, the rate function
for the offered load sample path is
\begin{equation}
\label{eq:offered-exp-service}
\begin{split}
\hat J_t(\psi) = \inf_{\substack{\phi \in \bar \sC[0,t] \\ \dot \phi(t) - \dot
  \psi(s) \geq 0,~s\in [0,t]}} -\check J_t(\phi,\psi).
\end{split}
\end{equation}


We now establish the LDP for the workload process at a fixed $t \in [0,1]$.
\begin{theorem} \label{thm:ldp-workload}
  Fix $t\in[0,1]$. Then, the sequence of random variables $\{
  W^n(t),~n\geq 1\}$ satisfy the LDP with good rate function $\tilde
  J_t(y) = \inf_{\{\phi \in \sX : y = \Gamma(\phi)(t)\}} \hat
  J_t(\phi)$ for all $y \in \bbR$.
\end{theorem}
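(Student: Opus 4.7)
The plan is to execute the road-map laid out at the start of Section~\ref{sec:workload}: first prove the LDP for the interpolated workload $\tilde W^n(t) = \Gamma(\tilde X^n)(t)$ by a contraction argument on the sample path LDP of Proposition~\ref{prop:sample-path-offered}, and then transfer the conclusion to $W^n(t)$ itself through the exponential equivalence of Proposition~\ref{prop:exponential-eq}.

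For the contraction step, I would introduce the one-point reflection functional $\Psi_t : \sC[0,t] \to \bbR$ defined by
\[
\Psi_t(\phi) := \phi(t) + \sup_{0 \leq s \leq t} (-\phi(s)),
\]
so that $\tilde W^n(t) = \Psi_t(\tilde X^n)$. By \cite[Theorem 13.5.1]{Wh01} the Skorokhod regulator $\Gamma$ is Lipschitz on $(\sD[0,t],\sU)$, hence $\Psi_t$ is continuous on $(\sC[0,t],\sU)$; since the latter is Polish, the sample path LDP of Proposition~\ref{prop:sample-path-offered} for $(\tilde X^n(s),~s\in[0,t])$ with good rate function $\hat J_t$, combined with the contraction principle \cite[Theorem 4.2.1]{DeZe2010}, yields that $\tilde W^n(t)$ satisfies the LDP with good rate function $y \mapsto \inf\{\hat J_t(\phi) : \Psi_t(\phi) = y\}$, which is exactly the expression for $\tilde J_t$ in the statement.

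For the transfer step, observe that $|\tilde W^n(t) - W^n(t)| \leq \|\tilde W^n - W^n\|$ pointwise, so Proposition~\ref{prop:exponential-eq} gives
\[
\limsup_{n\to\infty} \frac{1}{n} \log \bbP\bigl(|\tilde W^n(t) - W^n(t)| > \delta\bigr) = -\infty
\]
for every $\delta > 0$. Thus $\tilde W^n(t)$ and $W^n(t)$ are exponentially equivalent, and \cite[Theorem 4.2.13]{DeZe2010} lets $W^n(t)$ inherit the LDP with the same good rate function $\tilde J_t$.

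The chief technical obstacle is the mismatch of topologies between the two inputs to the contraction principle. The sample path LDP of Proposition~\ref{prop:sample-path-offered} is formulated in the pointwise-convergence topology (this is what the Dawson-Gartner projective limit delivers), whereas $\Psi_t$ is known to be continuous only under the supremum norm; pointwise convergence of continuous functions on $[0,t]$ does not generally imply uniform convergence. To discharge this I would upgrade the sample path LDP to the uniform topology by verifying exponential tightness on $(\sC[0,t],\sU)$: the level sets $\{\psi : \hat J_t(\psi) \leq c\}$ consist of absolutely continuous paths whose densities are constrained by the combined Cramer control of $\hat I_t$ and $\hat \Lambda_t$, and a modulus-of-continuity estimate of the type used in the classical Mogulskii argument shows these families are equicontinuous, hence precompact in $(\sC[0,t],\sU)$ by Arzela-Ascoli. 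Once exponential tightness is in hand, the pointwise LDP upgrades automatically to a uniform-topology LDP with the same good rate function, and the contraction through the continuous map $\Psi_t$ proceeds exactly as above.
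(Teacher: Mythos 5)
Your architecture mirrors the paper's exactly: contract the sample-path LDP of Proposition~\ref{prop:sample-path-offered} through the one-point reflection functional $\phi \mapsto \phi(t) + \sup_{0\leq s\leq t}(-\phi(s))$ to obtain the LDP for $\tilde W^n(t)$, then transfer to $W^n(t)$ via the exponential equivalence of Proposition~\ref{prop:exponential-eq}. The paper's own proof is terse on precisely the point you single out: it applies the contraction principle with $\sC[0,t]$ carrying the topology of pointwise convergence and merely asserts that $\Gamma$ is continuous, without reconciling this with the fact that $\phi \mapsto \sup_{0\leq s\leq t}(-\phi(s))$ is \emph{not} continuous under pointwise convergence on $\sC[0,t]$ --- continuous functions $\phi_n$ with ever narrower deep spikes converge to $0$ pointwise while $\sup(-\phi_n)$ stays bounded away from $0$. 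You have correctly identified a lacuna that the paper glosses over, and you have the right idea for filling it: upgrade the sample-path LDP to the uniform topology so that contraction through the (then genuinely continuous) reflection functional is legitimate.

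Where your argument falls short is in actually securing the upgrade. Exponential tightness in $(\sC[0,t],\sU)$ requires that for each $M$ there is a $\sU$-compact $K_M$ with $\limsup_n n^{-1}\log\bbP(\tilde X^n \notin K_M) \leq -M$; this is a statement about the \emph{random paths} $\tilde X^n$ and their laws, not about the deterministic paths lying in the level sets $\{\psi : \hat J_t(\psi)\leq c\}$. What you sketch --- absolute continuity of level-set paths plus Arzel\`a--Ascoli --- would at best show that those level sets are $\sU$-compact, i.e.\ that $\hat J_t$ is good with respect to the uniform topology; but goodness of the rate function and exponential tightness of the sequence are logically distinct, and neither implies the other in general. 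To supply the missing ingredient you would need a uniform exponential modulus-of-continuity estimate directly on $\tilde X^n$, something of the form $\lim_{\delta\downarrow 0}\limsup_n n^{-1}\log\bbP(\sup_{|u-s|\leq\delta}|\tilde X^n(u)-\tilde X^n(s)| > \eta) = -\infty$ for every $\eta>0$ --- the analogue of the estimates that drive Mogulskii's theorem --- and then assemble the compacts $K_M$ from such bounds. Once that is in place, the inverse contraction principle \cite[Corollary~4.2.6]{DeZe2010} upgrades the pointwise-topology LDP to a uniform-topology LDP with the same rate function, and the remainder of your argument (continuity of the reflection functional on $(\sC[0,t],\sU)$, contraction, and the final exponential-equivalence transfer via \cite[Theorem~4.2.13]{DeZe2010}) is correct as stated.
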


\begin{proof}
Recall that $\Gamma: \sC[0,t] \to \sC[0,t]$ is
continuous. Furthermore, $\sC[0,t]$ (under the topology of pointwise
convergence) and $\bbR$ are Hausdorff spaces. Therefore, the conditions of the
contraction principle \cite[Theorem 4.2.1]{DeZe2010} are
satisfied. Thus, it follows that $\{\tilde W^n(t),~n \geq 1\}$
satisfies the LDP with the rate function $\tilde J_t$. Finally, the
exponential equivalence proved in~Proposition
\ref{prop:exponential-eq} implies that $\{W^n(t),~n\geq 1\}$ 
satisfies the LDP with rate function $\tilde J_t$ , thus completing the proof.
$\QED$
\end{proof}

\subsection{Effective Bandwidths}
As noted in Section~\ref{sec:model}, our primary motivation for
studying the large deviation principle is to model the likelihood that
the workload at any point in time $t \in [0,1]$ exceeds a large
threshold. This is also related to the fact that most queueing models
in practice have finite-sized buffers, and so understanding the
likelihood that the workload exceeds the buffer capacity is crucial
from a system operation perspective. More precisely, if $w \in
[0,\infty)$ is the buffer capacity, we are interested
in probability of the event $\{W^n(t) > w\}$. Theorem~\ref{thm:ldp-workload} implies that
\[
\bbP\left( W^n(t) > w \right) \leq \exp(-n \tilde J_t((w,\infty))),
\]
where $\tilde J_t((w,\infty)) = \inf_{y \in (w,\infty)} \tilde
J_t(y)$. A reasonable performance measure to consider in this model is
to find the `critical time-scale' at which the large exceedance occurs
with probability at most $p$. That is, we would like to find
\[
t^* := \inf \{t > 0 | \exp(-n \tilde J_t((w,\infty))) \leq p\}.
\]
Consider the inequality $\tilde J_t((w,\infty)) \geq -\frac{1}{n} \log p$. Using the definition of rate function, we have
\[
\begin{split}
\inf_{f \in \sX : y = \Gamma(f)(t)} \inf_{\phi \in \bar
  \sC^0_f[0,t]} -\int_0^t \bigg( &\log(\dot
  \phi^{(a)}(r)) + \L^*(\dot \phi^{(a)}(r) - \dot f^{(a)}(r))  \bigg)
dr \\ &+ (1-t) \log \left( \frac{1-t}{1-\phi(t)} \right) + (\phi^{(s)}(t)
- f^{(s)}(t)) \geq -\frac{\log p}{n},
\end{split}
\]
where we define $\bar \sC^{0}_f[0,t] := \{g \in \bar \sC[0,t] : \dot g(s) - \dot
f(s) \geq 0, ~\forall s \in [0,t]\}$ for brevity. The critical
time-scale will be the optimizer of this constrained variational problem.
\section{Conclusions}
The large deviation principle derived for the `uniform scattering'
case in this paper provides the first result on the rare event
behavior of the $RS/GI/1$ transitory queue, building on the fluid and diffusion
approximation results established in
\cite{Lo94,HoJaWa2014,HoGl2016,BeHoJvL2015}. Our results are an
important addition to the body of knowledge dealing with rare events
behavior of queueing models. In particular, a standard assumption is that
the traffic model has independent increments, while our model assumes
exchangeable increments in the traffic count process by design. We
believe that the results in this paper are the first to report large
deviations analyses of queueing models under such conditions.

Our next step in this line of research will be to extend the analysis to
queues with non-uniform arrival epoch distributions, including
distributions that are not absolutely continuous. In this case, the
contraction principle cannot be directly applied, complicating the
analysis somewhat. In
\cite{HoGl2016} we have made initial progress under a `near-balanced'
condition on the offered load process, where the traffic and service
effort (on average) are approximately equal. However, it is unclear
how to drop the assumption of near-balancedness. In particular, when
the distribution is general, it is possible for the queue to enter
periods of underload, overload and critical load in the fluid
limit. This must have a significant impact on how the random variables
are `twisted' to rare outcomes. We do not believe it will be possible
to exploit ~\eqref{eq:order-expo} to establish the LDP. A
further problem of interest is to consider a different acceleration
regime. In the current setting we assumed that the service times
$\nu_i$ are scaled by the population size. However, it is possible to entertain
alternate scalings, such as $\nu_i^n = n^{-1}\nu_i(1+\beta
n^{1/3})$ as in \cite{BeHoJvL2015}, or scalings that are
dependent on the operational time horizon of interest. We leave these
problems to future papers.

\acks
We thank Peter W. Glynn and Vijay Subramanian for many helpful discussions over the
course of writing this paper. We also thank the anonymous reviewer for
pointing out that it was possible to significantly extend the analysis
to prove a general LDP, and providing helpful hints on how to achieve
this. This research was supported in part by the National Science
Foundation through grant CMMI-1636069.
\bibliography{refs-queueing}
\bibliographystyle{apt}
\end{document}